\newtheorem{theorem}{Theorem}[section]
\newtheorem{lemma}[theorem]{Lemma}
\newtheorem{prop}[theorem]{Proposition}
\newtheorem{rem}[theorem]{Remark}
\newtheorem{definition}[theorem]{Definition}
\newenvironment{proof}[1][]{{\medskip\noindent\it Proof #1: }}{
\hfill$\square$\\ \\}
\numberwithin{equation}{section}
\newcommand{\ranglemp}[1]{\rangle_{-\frac{1}{2},\frac{1}{2},#1}}
\newcommand{\ranglepp}[1]{\rangle_{\frac{1}{2},#1}}
\newcommand{\ranglemm}[1]{\rangle_{-\frac{1}{2},#1}}
\newcommand{\psie}[1]{\hat{\mathsf e}_{#1}}
\newcommand{\curve}{{\mathscr C}}
\newcommand{\SL}[1]{{\mathscr S}_{#1}}
\newcommand{\Hp}[1]{H^\frac{1}{2}(#1)}
\newcommand{\Hm}[1]{H^{-\frac{1}{2}}(#1)}
\renewcommand{\Re}{{\rm Re}\,}
\renewcommand{\Im}{{\rm Im}\,}
\def\presuper#1#2%
\begin{document}
\title{Conformal mapping  for cavity inverse problem: an explicit reconstruction formula}
\author{Alexandre
	Munnier\footnote{Universit\'e de Lorraine and CNRS, Institut \'Elie Cartan de
		Lorraine, UMR 7502, Vand\oe uvre-l\`es-Nancy, F-54506, France.
		\texttt{alexandre.munnier@univ-lorraine.fr}}\and Karim Ramdani\footnote{Inria,
		Villers-l\`es-Nancy, F-54600, France. \texttt{karim.ramdani@inria.fr}}}
\date{\today}
\maketitle
\begin{abstract}
	In this paper, we address a classical case of the Calder\'on (or conductivity)
	inverse problem in dimension two. We aim to  recover the location and the shape of a
	single cavity $\omega$ (with boundary $\gamma$)  contained in a domain
	$\Omega$ (with boundary $\Gamma$) from the knowledge of 
	the Dirichlet-to-Neumann (DtN) map $\Lambda_\gamma: f \longmapsto \partial_n
	u^f|_{\Gamma}$,  
	where $u^f$ is harmonic in $\Omega\setminus\overline{\omega}$, $u^f|_{\Gamma}=f$ and 
	$u^f|_{\gamma}=c^f$, $c^f$ being the constant such that $\int_{\gamma}\partial_n
	u^f\,{\rm d}s=0$. We obtain an explicit formula for the complex coefficients $a_m$ arising in the expression of the Riemann map
	$z\longmapsto a_1 z + a_0 + \sum_{m\leqslant -1} a_m z^{m}$
	that conformally maps the exterior of the unit disk onto the exterior of
	$\omega$. This formula is derived by using two ingredients: a new factorization result of the DtN map 	and the so-called generalized P\'olia-Szeg\"o tensors (GPST) of the cavity. As a byproduct of our analysis, we also prove the analytic dependence of the coefficients $a_m$ with respect to the DtN. Numerical results are provided to illustrate the efficiency and simplicity of the method.  
\end{abstract}
\section{Introduction}
%==============================================================================

Let $\Omega$ be a simply connected open bounded set in $\mathbb R^2$ with Lipschitz  boundary $\Gamma$. Let $\sigma$ be a positive function in $L^\infty(\Omega)$ and consider the elliptic boundary value problem:
\begin{subequations}
\label{calderon}
\begin{alignat}{3}
-\nabla\cdot(\sigma\nabla u)&=0 &\quad&\text{in }\Omega\\
u&=f&&\text{on }\Gamma.
\end{alignat}
Calder\'on's inverse conductivity problem \cite{Cal80} can be stated as follows: Knowing the Dirichlet-to-Neumann (DtN) map $\Lambda_\gamma:f\longmapsto \partial_nu^f$, is it possible to recover the conductivity $\sigma$?
\end{subequations}

\begin{figure}[h]%[htp]
\centerline{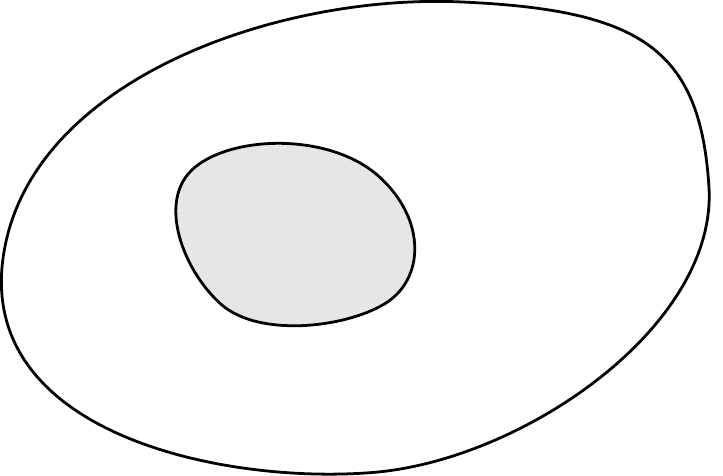}
\caption{The geometry.}
\label{fig:cavity}
\end{figure}

In this work, we focus on the particular case of piecewise conductivity with infinitely high contrast (see for instance Friedman and Vogelius \cite{FriVog89} who considered this problem in the case of small inclusions).  
%\begin{equation}
%\sigma = 1 + (\alpha-1)\chi_\omega,
%\end{equation}
%\end{subequations}
%where $\alpha\to +\infty$.
%... isolated inclusions, infinite conductivity...
More precisely, we suppose that $\Omega$ contains a cavity $\omega$, where $\omega$ is an open connected set with Lipschitz boundary $\gamma$ and such that $\overline{\omega}\subset\Omega$  (see Figure~\ref{fig:cavity}).
We denote by $n$ the unit normal to
$\Gamma\cup \gamma$ directed towards the exterior of $\Omega\setminus\overline{\omega}$.  

For every $f$ in $H^{\frac{1}{2}}(\Gamma)$, we denote by  $(u^f,c^f)\in
H^1(\Omega\setminus\overline{\omega})\times\mathbb R$ the solution to the Dirichlet
problem:
\begin{subequations}
\label{main_problem}
\begin{alignat}{3}
-\Delta u^f&=0&\quad&\text{in }\Omega\setminus\overline{\omega}\\
u^f&=f&&\text{on }\Gamma\\
u^f&=c^f&&\text{on }\gamma,
\end{alignat}
where $c^f$ is the unique constant such that:
\begin{equation}
\label{free_circ}
\int_\gamma \partial_n u^f\,{\rm d}\sigma =0.
%\langle \partial_nu^f,1\rangle_{H^{-\frac{1}{2}}(\Gamma)\times
%H^{\frac{1}{2}}(\Gamma)}= 0.
\end{equation}
\end{subequations}
%In dimension 3, the constraint \eqref{free_circ} is no longer required in our analysis and the constant $c^f$ is chosen to be equal to 0.
Problem \eqref{main_problem} is well-posed and its solution is the limit of the solution of \eqref{calderon} for piecewise constant conductivity, when the contrast between the cavity and the background tends to infinity (see Proposition~\ref{propppp} of the Appendix for a precise statement of this classical result and for the proof, which is given for the sake of completeness).

Loosely  speaking (the exact functional framework will be made precise later
on), the inverse problem considered throughout this paper is the following:
\emph{knowing the Dirichlet-to-Neumann (DtN) map $\Lambda_\gamma:f\longmapsto
\partial_nu^f$, how to reconstruct the cavity $\omega$?} 
	%-----------------------------------------------
\begin{rem}
In dimension 2, it is classical to see $u^f$ as the harmonic conjugate function of $v^f$, i.e. the solution to:
\begin{subequations}
\label{main_problem_potential}
\begin{alignat}{3}
-\Delta v^f&=0&\quad&\text{in }\Omega\setminus\overline{\omega}\\
\partial_n v^f&=\partial_\tau f&&\text{on }\Gamma\\
\partial_n v^f&=0&&\text{on }\gamma,
\end{alignat}
\end{subequations}
where $\tau:=n^\perp$ is the unit tangent vector to $\Gamma$. The function $u^f$ is usually referred to as the stream function 
associated to the potential function $v^f$. On $\Gamma$, $\partial_n u^f=-\partial_\tau v^f$ and therefore, the knowledge of $\Lambda_\gamma$ (i.e. the DtN for $u^f$)
is equivalent to the knowledge of the Neumann-to-Dirichlet map for $v^f$. 
\end{rem}

Classically for inverse problems, the questions of uniqueness, stability and reconstruction have been studied in the literature for cavities identification. Regarding uniqueness, it is well-known that one pair $(f,\partial_nu^f)$ of Cauchy data uniquely determines the geometry of the cavity for a Dirichlet boundary condition (see Kress \cite{Kre04}) or a Neumann boundary condition (see Alessandrini and Rondi \cite{AleRon01}). For Robin type condition,  Bacchelli \cite{Bac09} proved that two excitations  $f_1$ and $f_2$ uniquely determine the cavity provided they are linearly independent and one of them is positive. Concerning stability, as shown by Mandache \cite{Man01}, logarithmic stability is best possible (see also Alessandrini and Rondi  \cite{AleRon01} and references therein). Among the reconstruction methods available in the literature for shape identification, one can distinguish two classes of approaches: iterative and non iterative methods (see for instance the survey paper by Potthast \cite{Pot06} for an overview of  reconstruction methods). In the first class of methods, one computes a sequence of approximating shapes, generally by solving at each step the direct problem and using minimal data (typically only one or several pairs of Cauchy data, and not the full DtN map). Among these approaches, we can mention those based on optimization \cite{Bor02,CapFehGou09}, on the reciprocity gap principle \cite{KreRun05,IvaKre06,CakKre13}, on the quasi-reversibility \cite{BouDar10,BouDar14} or on conformal mapping \cite{AkdKre02,Kre04,HadKre05,HadKre06,HadKre10,Kre12,HadKre14}. 

The second class of methods covers non iterative methods which are generally based on the construction (from the measurements) of an indicator function of the inclusion(s). These sampling/probe methods do not need to solve the forward problem, but require the knowledge of the full DtN map. Among these reconstruction techniques, let us mention --with no claim as to completeness-- the enclosure and probe method of Ikehata \cite{Ike98,Ike00,Ike00b,IkeSil00,ErhPot06}, Kirsch's Factorization method \cite{BruHan00,HanBru03,Kir05} and Generalized Polya-Szeg\"o Tensors in the case of small inclusions \cite{AmmKan04,AmmKan06,AmmKan07,AmmGarKan14}.

Our purpose in this paper is to propose a new non iterative reconstruction method that combines some of the ingredients used in earlier works, namely: a new factorization result (Theorem~\ref{first_decom}), Generalized Polya-Szeg\"o Tensors and conformal mapping. The main feature of our reconstruction method is that we end up with an {\em explicit reconstruction formula} (Theorem~\ref{thm:expl}) for the complex coefficients $a_k$ arising in the expression of the Riemann map  $z\longmapsto a_1 z + a_0 + \sum_{m\leqslant -1} a_m z^{m}$ that conformally maps the exterior of the unit disk onto the exterior of $\omega$. Let us emphasize that these reconstruction formulae also yield the analytic dependence of the coefficients with respect to the DtN.

The proposed reconstruction algorithm can --in principle-- be adapted to other boundary conditions. However, such as most direct reconstruction methods, it requires the knowledge of the full DtN map and so far, it is limited to the two-dimensional case due to the use of conformal mapping. 

The paper is organized as follows: we present in Section~\ref{sect:EI} a boundary integral formulation of the problem. Section~\ref{sect:MAIN} is devoted to the derivation of the reconstruction formula, using a new factorization result and GPST. Some issues about stability are also discusses therein. Finally, some numerical results are given in Section~\ref{sect:NUM}.

%==============================================================================
\section{Boundary integral formulation}
\label{sect:EI}
%==============================================================================
\subsection{Background on single layer potential}
In this section, we collect some well known facts of potential theory, and more especially on single layer potential, that are crucial for our method. For more details and for the proofs, we refer the interested reader to the monographs of McLean \cite{McL00}, Steinbach \cite{Ste08b} or Hsiao and Wendland \cite{HsiWen08}.

Throughout the article, we shall denote by 
$$
G(x)= -\frac{1}{2\pi}\log|x|
$$
%$$G(x)= \begin{cases}\displaystyle\frac{1}{2\pi}\ln\left(\frac{1}{|x|}\right)&\text{if }N=2\\
%\displaystyle\frac{1}{4\pi}\frac{1}{|x|}&\text{if }N=3,
%\end{cases}$$
the fundamental solution of the operator $-\Delta$ in $\mathbb R^2$. 

Let $\mathsf C_i$ be a  bounded, Lipschitz domain (see \cite[Definition 3.28]{McL00}) and denote by $\mathscr C$ its boundary. Let $n$ be the unit normal to $\mathscr C$ directed towards the exterior of $\mathsf C_i$.  

The exterior of $\mathsf C_i$ is denoted $\mathsf C_e:=\mathbb R^2\setminus\overline{\mathsf C}_i$. Given a function $u$ in  $H^{1}_{\ell oc}(\mathbb R^2)$, we denote by $u_i$ and $u_e$ its  restrictions respectively to $\mathsf C_i$ and $\mathsf C_e$ and by $[u]_{\mathscr C}=u_i|_{\mathscr C}-u_e|_{\mathscr C}\in \Hp{\mathscr C}$ its jump across $\mathscr C$.  We also define similarly the jump of the normal derivative: $[\partial_{n}u]_{\mathscr C}=(\partial_{n}u_i)|_{\mathscr C}-(\partial_{n}u_e)|_{\mathscr C}\in \Hm{\mathscr C}$.

\begin{definition}
	For every $\hat q\in H^{-\frac{1}{2}}(\mathscr C)$, we denote by $\mathscr
	S_{\mathscr C} \hat q$ the single layer potential associated to the density $\hat q$. 
\end{definition}
The single layer potential $\mathscr S_{\mathscr C} \hat q$ defines a harmonic
function in $\mathbb R^2\setminus \mathscr C$. The operator $\mathscr
S_{\mathscr C}$ is an integral operator with weakly singular kernel, so that for $\hat q\in L^\infty(\mathscr C)$ for instance and  $x\in\mathbb R^2\setminus\mathscr C$, it reads:
$$\mathscr S_{\mathscr C} \hat q(x)=\int_{\mathscr C} G(x-y) \hat q(y)\,{\rm d}\sigma_y.$$
Moreover, the single layer potential defines a bounded linear operator from $H^{-\frac{1}{2}}(\mathscr C)$ into $H^1_{\rm \ell oc}(\mathbb R^2)$, and $\mathscr S_{\mathscr C} \hat q$ admits the following asymptotic behavior at infinity  (see for instance \cite[p.~261]{McL00})
\begin{equation}\label{asymptSC}
\mathscr S_{\mathscr C} \hat q(x) = -\frac{1}{2\pi}\langle \hat q, 1\ranglemp{{\mathscr C}}\, \log |x| + O(|x|^{-1}),
\end{equation}
where  $\langle \cdot, \cdot\ranglemp{{\mathscr C}}$ stands for the duality brackets between $\Hm{\mathscr C}$ and $\Hp{\mathscr C}$. This shows in particular that $\mathscr S_{\mathscr C} \hat q\in H^1(\mathbb R^2)$ if and only if $\hat q$ belongs to the function space 
$$
{\widehat H}({\mathscr C}):= \{\hat q\in H^{-\frac{1}{2}}({\mathscr C})\,:\,\langle \hat q,1\ranglemp{{\mathscr C}} =0\}.
$$
We also recall that the single layer potential satisfies the following classical jump conditions 
\begin{equation}\label{eq:jump}
[\mathscr S_{\mathscr C} \hat q\, ]_{\mathscr C}=0, \qquad \qquad 
\left[ \partial_{n}(\mathscr S_{\mathscr C} \hat q) \right]_{\mathscr C}=\hat q.
\end{equation}
%The function $\mathscr S_\mathscr C \hat q$ can also be seen as the unique solution in $H^1_{\rm \ell oc}(\mathbb R^2)$ of the (well-posed) transmission problem:
%\begin{subequations}
%	\label{def_trans}
%	\begin{alignat}{3}
%	-\Delta u&=0&\quad&\text{in }\mathbb R^2\setminus\mathscr C\\
%	[u]&=0&&\text{on }\mathscr C\\
%	[\partial_nu]&=\hat q&&\text{on }\mathscr C,\\
%	u&=\mathcal O(G(x))&&\text{as }|x|\to+\infty,
%	\end{alignat}
%\end{subequations}
Let us focus now on the trace of the single layer potential.
\begin{definition}
For every $\hat q\in H^{-\frac{1}{2}}(\mathscr C)$, we denote by ${\mathsf S}_{\mathscr C}\hat q$
the trace of the  single layer operator $\mathscr S_\curve \hat q$ on $\mathscr
C$.
\end{definition}
The operator ${\mathsf S}_{\mathscr C}$ is an integral operator with weakly singular
kernel as well. For $\hat q\in L^\infty(\mathscr C)$ and 
for every $x\in \mathscr C$, it reads:
$${\mathsf S}_{\mathscr C} \hat q(x)=\int_{\mathscr C} G(x-y) \hat q(y)\,{\rm d}\sigma_y.$$
The trace ${\mathsf S}_{\mathscr C}$ of the single layer operator defines a bounded linear operator from $H^{-\frac{1}{2}}(\mathscr C)$ into $H^{\frac{1}{2}}(\mathscr C)$. Furthermore, using Green's formula and the asymptotics \eqref{asymptSC}, we can easily prove the identity
\begin{equation}
\label{equiv:decroi}
\langle \hat q,\mathsf S_\Gamma \hat q  \ranglemp{{\mathscr C}} = \int_{\mathbb R^2} \left|\nabla (\mathscr S_{\mathscr C} \hat q)\right|^2Ê{\rm d}x<+\infty, \qquad \forall \hat q\in {\widehat H}({\mathscr C}).
\end{equation}
Following \cite[Theorem 8.15]{McL00}, we also introduce the following particular
density and constant which will play a crucial role in our analysis.
\begin{definition}
\label{def:psieq}
The equilibrium density for ${\mathscr C}$ is the unique density $\hat{\mathsf e}_{\mathscr C}\in H^{-\frac{1}{2}}({\mathscr C})$ such that $ {\mathsf S}_{\mathscr C}
\hat{\mathsf e}_{\mathscr C}$ is constant on ${\mathscr C}$ and 
$$\langle \hat{\mathsf e}_{\mathscr C},1\ranglemp{{\mathscr C}}=1.$$
The logarithmic capacity ${\rm Cap}({\mathscr C})$ of ${\mathscr C}$ is defined
as being the positive constant:
$$
{\rm Cap}({\mathscr C})=
\exp\left(-2\pi {\mathsf S}_{\mathscr C} \hat{\mathsf e}_{\mathscr C}\right).
$$
%	$${\rm Cap}({\mathscr C})=\begin{cases}
%\exp\left(-2\pi {\mathsf S}_{\mathscr C} {\mathsf e}_{\mathscr
%	C}\right)&\text{when }N=2,\\
%	1/{\mathsf S}_{\mathscr C} {\mathsf e}_{\mathscr
%	C}&\text{when }N=3.
%	\end{cases}$$
%	(When $N=3$, it can be proved that ${\mathsf S}_{\mathscr C} {\mathsf e}_{\mathscr C}>0$, see \cite[Theorem 8.15 p. 263]{McL00}).
\end{definition}
%\begin{prop}[{\cite[p. 219]{McL00}}]
%	If $\mathscr C$ is $C^{1,1}$, then ensures
%	that ${\mathsf e}_{\mathscr C}\in H^{\frac{1}{2}}({\mathscr C})$.
%\end{prop}
Setting 
$$
H({\mathscr C}):=\{q\in H^{\frac{1}{2}}({\mathscr C})\,:\,\langle \hat{\mathsf e}_{\mathscr C}, q\ranglemp{{\mathscr C}} =0\}
$$
we know, following McLean \cite{McL00}, that the linear operator:
$$ 
{\mathsf S}_{\mathscr C}:  \hat q\in {\widehat H}({\mathscr C})\longmapsto q\in
{H}({\mathscr C}),$$
defines an isomorphism that extends into an isomorphism from $H^{-\frac{1}{2}}({\mathscr C})$ onto
$H^{\frac{1}{2}}({\mathscr C})$,  if ${\rm Cap}({\mathscr C})\neq 1$ (see \cite[Theorem 8.16]{McL00}).
Under this condition, one can identify via this isomorphism any density $\hat q \in  {\widehat H}({\mathscr C})$ with the trace 
$$q:={\mathsf S}_{\mathscr C}  \hat q \in H({\mathscr C}).$$
This identification will be systematically used throughout the paper, using the notation with (respectively without) a hat on the density like quantities (respectively on traces). This isomorphism turns out to be an isometry provided the  spaces ${\widehat H}({\mathscr C})$ and ${H}({\mathscr C})$ are endowed with the following inner products:
\begin{definition}
	\label{inner} 
	For all $\hat q,\hat p\in {\widehat H}({\mathscr C})$, we set:
	$$\langle \hat q,\hat p\ranglemm{{\mathscr C}}=\langle q,p\ranglepp{{\mathscr C}}=\langle \hat q, p \ranglemp{{\mathscr C}}.$$
%	For all $q,p\in {H}({\mathscr C})$, we denote by $\hat q= {\mathsf S}_{\mathscr C}^{-1}q$	and $\hat p= {\mathsf S}_{\mathscr C}^{-1}p$ the corresponding densities and we set:
%	$$\langle q,p\ranglepp{{\mathscr C}}:=\langle \hat q,\hat q\ranglemm{{\mathscr C}}=\langle  \hat q,p\ranglemp{{\mathscr C}}.$$
\end{definition}
%Note that with the norms associated with these inner products, $ {\mathsf S}_{\mathscr C}$
%is now an isometry. Using this isometric isomorphism to identify $H({\mathscr
%	C})$ and ${\widehat H}({\mathscr C})$, we will use henceforth the
%notation $\hat f$ to denote the element ${\mathsf S}_{\mathscr C}^{-1}f  \in {\widehat
%	H}({\mathscr C})$, for every $f\in H({\mathscr C})$.

%The space ${\widehat H}({\mathscr C})$ could have been equivalently defined as
%the space 
%of  densities $\hat f$ for which the Dirichlet energy of the corresponding
%single layer potential is finite, i.e.
%\begin{equation}
%\label{equiv:decroi}
%\int_{\mathbb R^2}|\nabla\mathscr S_{\mathscr C} \hat f|^2\,{\rm
%	d}x<+\infty.
%	\end{equation}
%We have the equivalence, for every $q\in \Hp{\curve}$:
%	\begin{equation}
%	\label{asymp}
%	q\in H(\curve)\Leftrightarrow \SL{\curve} \hat q(x)=o(|G(x)|)\quad\text{as}\quad |x|\to+\infty.
%	\end{equation}
According to \eqref{equiv:decroi}, the inner products introduced in Definition~\ref{inner} are  related to
the Dirichlet energy of the single layer potential through the following
identities:
$$\|\hat q\|^2_{-\frac{1}{2},\mathscr C}=\|q\|^2_{\frac{1}{2},\mathscr 	C}=\int_{\mathbb R^2}|\nabla(\mathscr S_{\mathscr C} \hat q)|^2\,{\rm d}x,\qquad \forall \hat q\in {\widehat H}({\mathscr C}).
$$
We also need in the sequel the following orthogonal projections.
\begin{definition}
Let $\Pi_{\mathscr C}$ and $\widehat\Pi_{\mathscr C}$ denote respectively the orthogonal projections  from $H^{\frac{1}{2}}({\mathscr C})$ into $H({\mathscr C})$ and from $H^{-\frac{1}{2}}({\mathscr C})$ into ${\widehat H}({\mathscr C})$.  
\end{definition}
In particular, we have the following unique decompositions:
\begin{alignat*}{3}
\forall\,  \hat q\in H^{-\frac{1}{2}}({\mathscr C}):\qquad \hat q &= \langle \hat q,1\rangle \,\hat{\mathsf e}_{\mathscr C} + \hat q_0,&\qquad&  \hat q_{0}&:=\widehat \Pi_{\mathscr C}\hat q\in {\widehat H}({\mathscr C}),\\
\forall \,  q\in H^{\frac{1}{2}}({{\mathscr C}}):\qquad q &= \langle  \hat {\mathsf e}_{\mathscr C}, q\rangle \, 1 +  q_0,&&  q_0&:=\Pi_{\mathscr C} q\in H({{\mathscr C}}).
\end{alignat*}
\begin{definition}\label{def:Tr0}
We denote by ${\mathsf{Tr}}_{\mathscr C}$  the classical trace operator (valued into $\Hp{\curve}$), and by ${\mathsf{Tr}}^0_{\mathscr C}$ when it is left-composed with the orthogonal projection onto $H(\mathscr C)$: ${\mathsf{Tr}}^0_{\mathscr C}:= \Pi_\curve {\mathsf{Tr}}_{\mathscr C}$.
\end{definition}
Let us conclude this preliminary section by a useful characterization of the chosen norm on $H(\mathscr C)$. Classically, we define the quotient weighted Sobolev space:
$$W^1_0(\mathbb R^2)=\{u\in\mathcal D'(\mathbb R^2)\,:\,\rho u \in L^2(\mathbb R^2),\,\nabla u\in (L^2(\mathbb R^2))^2\}/\mathbb R,$$
where the weight is given by 
$$\rho(x):=\left(\sqrt{1+|x|^2}\log(2+|x|^2)\right)^{-1},\qquad x\in\mathbb R^2,$$
and where the quotient means that functions of $W^1_0(\mathbb R^2)$ are defined up to an additive constant. This space is a Hilbert space once equipped with the inner product:
$$\langle u,v\rangle_{W^1_0(\mathbb R^2)}:=\int_{\mathbb R^2}\nabla u\cdot\nabla v\,{\rm d}x.$$
In particular, according to \eqref{equiv:decroi},  ${\mathscr S}_{\mathscr C}\hat q\in W^1_0(\mathbb R^2)$ if and only if  $q\in H({\mathscr C})$, and moreover 
$$
\|\hat q\|_{-\frac{1}{2},\mathscr C} = \|q\|_{\frac{1}{2},\mathscr C} = \|{\mathscr S}_{\mathscr C}\hat q\|_{W^1_0(\mathbb R^2)}, \qquad \forall q\in H({\mathscr C}).
$$
%-------------------------------------
\begin{lemma}
\label{LEM:min}
For every $q\in H(\mathscr C)$, we have
$$\|q\|_{\frac{1}{2},\mathscr C}=\inf\left\{\|u\|_{W^1_0(\mathbb R^2)}\,:\,u\in W^1_0(\mathbb R^2) \text{ and } {\mathsf Tr}^0_{\mathscr C}u=q\right\}.$$
The infimum is a minimum which is uniquely achieved by $u=\mathscr S_{\mathscr C}\hat q$.
\end{lemma}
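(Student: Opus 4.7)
The plan is to show that $u_\star := \mathscr S_{\mathscr C}\hat q$ is admissible in the constraint set, that it realizes the value $\|q\|_{\frac{1}{2},\mathscr C}$, and that any competing $u$ satisfies a Pythagoras identity with respect to $u_\star$, forcing strict inequality unless $u=u_\star$.

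First I would check admissibility of $u_\star$. Since $q\in H(\mathscr C)$, the isomorphism ${\mathsf S}_{\mathscr C}:\widehat H({\mathscr C})\to H({\mathscr C})$ gives a unique $\hat q\in \widehat H({\mathscr C})$ with ${\mathsf S}_{\mathscr C}\hat q = q$. The identity $\langle \hat q,1\ranglemp{\mathscr C}=0$ combined with the asymptotics \eqref{asymptSC} shows that $\mathscr S_{\mathscr C}\hat q \in H^1(\mathbb R^2)\subset W^1_0(\mathbb R^2)$. Its trace on $\mathscr C$ is $q\in H(\mathscr C)$, so ${\mathsf{Tr}}^0_{\mathscr C}u_\star = \Pi_{\mathscr C} q = q$. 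Finally the identity displayed just before the lemma gives $\|u_\star\|_{W^1_0(\mathbb R^2)}=\|q\|_{\frac{1}{2},\mathscr C}$, so $u_\star$ is a candidate achieving the claimed value.

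The key step is to prove that for any admissible $u$, setting $w:=u-u_\star$, the functions $u_\star$ and $w$ are orthogonal in $W^1_0(\mathbb R^2)$. Splitting $\mathbb R^2$ into $\mathsf C_i$ and $\mathsf C_e$, using that $u_\star$ is harmonic on each side (and decays at infinity thanks to $\langle \hat q,1\ranglemp{\mathscr C}=0$), integration by parts yields
$$\int_{\mathbb R^2}\nabla u_\star\cdot\nabla w\,{\rm d}x = \langle [\partial_n u_\star]_{\mathscr C}, w|_{\mathscr C}\ranglemp{\mathscr C} = \langle \hat q, w|_{\mathscr C}\ranglemp{\mathscr C},$$
after invoking the jump relation \eqref{eq:jump}. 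Now ${\mathsf{Tr}}^0_{\mathscr C}u = q = {\mathsf{Tr}}^0_{\mathscr C}u_\star$, so $\Pi_{\mathscr C}(w|_{\mathscr C}) = 0$, which means $w|_{\mathscr C}$ lies in the kernel of $\Pi_{\mathscr C}$, namely the one-dimensional space of constants. Since $\hat q \in \widehat H(\mathscr C)$, its duality pairing with any constant vanishes, and we obtain the orthogonality.

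Pythagoras then gives $\|u\|^2_{W^1_0(\mathbb R^2)} = \|u_\star\|^2_{W^1_0(\mathbb R^2)} + \|w\|^2_{W^1_0(\mathbb R^2)} \geq \|q\|^2_{\frac{1}{2},\mathscr C}$, with equality only when $w=0$ in $W^1_0(\mathbb R^2)$, i.e.\ $u=u_\star$. The main technical obstacle is rigorously justifying the integration by parts in the weighted space: the boundary term at infinity must be shown to vanish, which is where the decay of $u_\star$ (guaranteed by $\hat q$ having zero mean) plays a decisive role, while $w$ is controlled via a standard truncation argument using the weight $\rho$ in the definition of $W^1_0(\mathbb R^2)$.
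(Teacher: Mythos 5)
Your proof is correct and follows essentially the same route as the paper: both argue via an orthogonal decomposition of $W^1_0(\mathbb R^2)$ along $\mathscr S_{\mathscr C}\hat q$, using the Green's-formula identity $\langle \hat q, {\mathsf Tr}^0_{\mathscr C} v\ranglemp{\mathscr C}=\langle \mathscr S_{\mathscr C}\hat q,v\rangle_{W^1_0(\mathbb R^2)}$ (your jump-relation computation is exactly this identity, and your ``$w\perp u_\star$'' is equivalent to the paper's ``$\lambda=1$''), followed by Pythagoras. Your version is merely a bit more explicit about why the integration by parts is legitimate, which the paper leaves implicit.
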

%Notice that $W^1_0(\mathbb R^2)$ consists of class of functions, two representatives of the same class differing up to an additive constant. The purpose of the projected trace operator in the lemma (instead of the usual trace operator) is to fix this constant, so that, in 
%the identity ${\mathsf Tr}^0_{\mathscr C}u=q$, $u$ can be taken to be any representative of the class. 

\begin{proof}
Given $q\in H(\mathscr C)$, let us consider the orthogonal decomposition:
$$W^1_0(\mathbb R^2)=\langle  \mathscr S_{\mathscr C}\hat q\rangle\oplus \langle  \mathscr S_{\mathscr C}\hat q\rangle^\perp.$$
Let $u\in W^1_0(\mathbb R^2)$ be such that ${\mathsf Tr}^0_{\mathscr C}u=q$. Writing  $u$ in the form $u=\lambda \mathscr S_{\mathscr C}\hat q+v$ with $\lambda\in\mathbb R$ and $v\in \langle\mathscr S_{\mathscr C}\hat q\rangle^\perp$, and taking the projected trace on $\mathscr C$ we get:
$$q=\lambda q+{\mathsf Tr}^0_{\mathscr C} v\qquad\text{in }H(\mathscr C).$$
Forming now the duality product with $\hat q$ and taking into account that: $\langle \hat q, {\mathsf Tr}^0_{\mathscr C} v\ranglemp{\mathscr C}=\langle \mathscr S_{\mathscr C}\hat q,v\rangle_{W^1_0(\mathbb R^2)}=0$, we deduce that $\lambda = 1$. Since we have now $$\|u\|^2_{W^1_0(\mathbb R^2)}=\|q\|_{\frac{1}{2},\mathscr C}^2+\|v\|^2_{W^1_0(\mathbb R^2)},$$
with $v$ such that ${\mathsf Tr}^0_{\mathscr C}v=0$, the conclusion follows.
\end{proof}

\subsection{Boundary interaction and single layer potential}
%================================================================

In this section, we are interested in quantifying the Dirichlet energy variation between $\SL{\Gamma}\hat q$ and $\SL{\gamma}\hat p$ where $p={\mathsf Tr}^0_\gamma \SL{\Gamma}\hat q$ (i.e. $p$ is the trace of 
the single layer potential $\SL{\Gamma}\hat q$ on $\gamma$).
%We set  $\mathsf C_i=\Omega\setminus\omega$ and $\mathsf C_e=\mathbb
%R^2\setminus\overline{\mathsf C}_i$, so that $\mathsf C_e$ consists of two connected
%components: an unbounded one, denoted by $\mathsf C_e^+$ and a bounded one,
%denoted by $\mathsf C_e^-$.

\begin{definition}
\label{def_K}
%Given ${\mathscr C}$ and ${\mathscr D}$, the boundaries of two Lipschitz domains such that $\curve\cap\mathscr D=\varnothing$, 
We define the boundary interaction operators  $\mathsf K_{\Gamma}^{\gamma}$ and $\mathsf K_{\gamma}^{\Gamma}$ between $\Gamma$ and $\gamma$  by:
$$
\mathsf K_{\Gamma}^{\gamma}: q\in H(\Gamma)\longmapsto {\mathsf{Tr}}^0_{\gamma}(\mathscr S_{\Gamma} {\hat q})\in H(\gamma),
\qquad
\mathsf K_{\gamma}^{\Gamma}: p\in H(\gamma)\longmapsto {\mathsf{Tr}}^0_{\Gamma}(\mathscr S_{\gamma} {\hat p})\in H(\Gamma),
$$
where ${\mathsf{Tr}}^0_{\gamma}$ and ${\mathsf{Tr}}^0_{\Gamma}$ are given in Definition~\ref{def:Tr0}.
\end{definition}

%--------------------------------------------------------------------
The next result shows that ${\mathsf{Tr}}^0_{\Gamma}$ can be replaced by ${\mathsf{Tr}}_{\Gamma}$ in the definition of $\mathsf K_{\gamma}^{\Gamma}$:
\begin{lemma}
\label{porj_nomore}
If $p\in H(\gamma)$, then $q:={\mathsf Tr}_\Gamma(\SL{\gamma}\hat p)$ belongs to $H(\Gamma)$. 
\end{lemma}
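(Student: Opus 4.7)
The plan is to verify directly the defining condition $\langle \hat{\mathsf e}_\Gamma, q\rangle_{-\frac{1}{2},\frac{1}{2},\Gamma}=0$ of membership in $H(\Gamma)$. The regularity $q\in H^\frac{1}{2}(\Gamma)$ is automatic: indeed, since $\hat p\in \Hm{\gamma}$, we know that $\mathscr S_\gamma\hat p\in H^1_{\ell oc}(\mathbb R^2)$, so its trace on $\Gamma$ is well-defined in $\Hp{\Gamma}$. The only work is thus to show the orthogonality relation against $\hat{\mathsf e}_\Gamma$.

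First I would exploit the symmetry of the Newtonian kernel $G(x-y)=G(y-x)$ together with the fact that $\gamma$ and $\Gamma$ are disjoint compact Lipschitz curves (since $\overline\omega\subset\Omega$). On the disjoint product $\Gamma\times\gamma$ the kernel $G$ is smooth, so the bilinear form $(\hat r,\hat s)\mapsto \iint_{\Gamma\times\gamma} G(x-y)\hat r(x)\hat s(y)\,{\rm d}\sigma_y{\rm d}\sigma_x$ extends by duality to a continuous pairing on $\Hm{\Gamma}\times\Hm{\gamma}$, and a standard Fubini argument (legitimate because the kernel is smooth on $\Gamma\times\gamma$) yields the reciprocity identity
\begin{equation*}
\langle \hat{\mathsf e}_\Gamma,\,{\mathsf Tr}_\Gamma(\SL{\gamma}\hat p)\ranglemp{\Gamma}\;=\;\langle \hat p,\,{\mathsf Tr}_\gamma(\SL{\Gamma}\hat{\mathsf e}_\Gamma)\ranglemp{\gamma}.
\end{equation*}

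Next I would compute ${\mathsf Tr}_\gamma(\SL{\Gamma}\hat{\mathsf e}_\Gamma)$. By Definition~\ref{def:psieq}, the function $v:=\SL{\Gamma}\hat{\mathsf e}_\Gamma$ is harmonic in $\mathbb R^2\setminus\Gamma$ and equal to the constant $c:=-\frac{1}{2\pi}\log {\rm Cap}(\Gamma)$ on $\Gamma$. Restricting to $\Omega$, $v-c$ is harmonic in $\Omega$ with zero Dirichlet data, hence vanishes identically by the maximum principle. Since $\gamma\subset\Omega$, we conclude ${\mathsf Tr}_\gamma(\SL{\Gamma}\hat{\mathsf e}_\Gamma)=c$ is the constant function on $\gamma$.

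Finally, since $p\in H(\gamma)$ corresponds (via the identification described before Definition~\ref{inner}) to a density $\hat p\in \widehat H(\gamma)$, we have $\langle \hat p,1\ranglemp{\gamma}=0$, so the right-hand side of the reciprocity identity equals $c\langle \hat p,1\ranglemp{\gamma}=0$. This gives $\langle \hat{\mathsf e}_\Gamma,q\ranglemp{\Gamma}=0$ and hence $q\in H(\Gamma)$. The only subtle point in the argument is justifying the Fubini/reciprocity step rigorously in the duality setting, but the positive distance between $\gamma$ and $\Gamma$ makes the relevant kernels smooth and the exchange straightforward; everything else is a direct consequence of the definitions and the maximum principle.
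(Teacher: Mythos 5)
Your proof is correct, and it takes a genuinely different route from the paper's. The paper argues by re-representation: it glues the function $w$ defined by $w_i=\SL{\Gamma}\hat q$ in $\Omega$ (with $\hat q:=\mathsf S_\Gamma^{-1}q$) and $w_e=\SL{\gamma}\hat p$ in $\mathbb R^2\setminus\overline\Omega$, notes that $w\in H^1_{\ell oc}(\mathbb R^2)$ has finite Dirichlet energy and that $w_e(x)=O(|x|^{-1})$ (both using $\hat p\in\widehat H(\gamma)$), and then applies the exterior Green representation formula on $\Gamma$ to conclude $w_e=\SL{\Gamma}\hat r$ with $\hat r:=\partial_n w_i-\partial_n w_e\in\widehat H(\Gamma)$; taking traces gives $q=r\in H(\Gamma)$. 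You instead verify the defining condition of $H(\Gamma)$ head-on: by symmetry of the kernel $G$ (smooth on $\Gamma\times\gamma$ since ${\rm dist}(\gamma,\Gamma)>0$, so the Fubini-plus-density argument is indeed routine, and it is exactly the mechanism the paper itself uses to prove the symmetry in Proposition~\ref{prop:symK}), you reduce $\langle \hat{\mathsf e}_\Gamma,q\ranglemp{\Gamma}$ to $\langle \hat p,{\mathsf Tr}_\gamma(\SL{\Gamma}\hat{\mathsf e}_\Gamma)\ranglemp{\gamma}$, and then use that the equilibrium potential of $\Gamma$ is constant throughout $\Omega$ (uniqueness for the interior Dirichlet problem), so the pairing vanishes because $\langle\hat p,1\ranglemp{\gamma}=0$. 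Your argument is shorter and more elementary — no representation formula and no asymptotics beyond the definition of $\widehat H(\gamma)$ — and it cleanly exposes the duality reason the lemma is true, namely that $\mathsf K_\gamma^\Gamma$ is the adjoint of $\mathsf K_\Gamma^\gamma$ and the equilibrium density of $\Gamma$ is ``invisible'' from inside $\Omega$. What the paper's longer route buys is extra structural information: the identity $\SL{\gamma}\hat p=\SL{\Gamma}\hat r$ in $\mathbb R^2\setminus\overline\Omega$, i.e.\ an explicit re-representation of the cavity's layer potential as a single layer on $\Gamma$, and this exterior-uniqueness trick is reused almost verbatim in the proof of the factorization Theorem~\ref{first_decom}. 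One small remark: quoting the precise constant $c=-\frac{1}{2\pi}\log{\rm Cap}(\Gamma)$ is harmless but unnecessary — all you need is that ${\mathsf Tr}_\gamma(\SL{\Gamma}\hat{\mathsf e}_\Gamma)$ is constant.
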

%--------------------------------------------------------------------
\begin{proof}
Let $p$ and $q$ be given as in the statement of the lemma and let us define the function $w:=(w_i,w_e)$ in $H^1_{\ell oc}(\mathbb R^2)$ by setting: $w_i=\SL{\Gamma}\hat q$ in $\Omega$ and $w_e=\SL{\gamma}\hat p$ in $\mathbb R^2\setminus\overline{\Omega}$. 
According to \eqref{equiv:decroi}, the function $w$ has finite Dirichlet energy since $p\in H(\gamma)$.  Thanks to \eqref{asymptSC}, we see that $w_e(x)=O(|x|^{-1})$ at infinity, and this allows us to obtain the following classical integral representation formula for every $x\in \mathbb R^2\setminus\overline{\Omega}$ (see for instance \cite[p.~182]{Ste08b} or \cite[Lemma 3.5]{CosSte85})
\begin{align*}
w_e(x)&=\langle  \partial_n G(x-\cdot), w_e  \ranglepp{\Gamma}
- \langle \partial_n w_e, G(x-\cdot)\ranglemp{\Gamma} \\
0&=\langle \partial_n G(x-\cdot), w_i \ranglemp{\Gamma} 
- \langle \partial_n w_i, G(x-\cdot)\ranglemp{\Gamma}.
\end{align*}
Since ${\mathsf Tr}_\Gamma w_i={\mathsf Tr}_\Gamma w_e=q$, we get by subtracting these identities that:
$$w_e(x)=\langle \hat r,G(x-\cdot)\ranglemp{\Gamma} =\SL{\Gamma}\hat r(x),\qquad x\in\mathbb R^2\setminus\overline{\Omega},$$
where the density $\hat r:=\partial_n w_i-\partial_n w_e$ belongs to $\widehat H(\Gamma)$ since, as already mentioned, $w$ has finite Dirichlet energy. Taking the trace on  $\Gamma$, we deduce from the above relation that $q= r\in H(\Gamma)$ and the proof is complete.
\end{proof}
%-------------------------------------
\begin{prop}
\label{prop:symK}
The operators $\mathsf K_\Gamma^\gamma$ and $\mathsf K_\gamma^\Gamma$ are compact, one-to-one and dense-range operators. Moreover, for every functions $q\in H(\Gamma)$ and $p\in H(\gamma)$, we have:
$$\langle \mathsf K_\Gamma^{\gamma} q,p\ranglepp{\gamma} = \langle  q,\mathsf K^\Gamma_\gamma p\ranglepp{\Gamma}.$$
\end{prop}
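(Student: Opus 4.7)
The plan is to establish the four properties separately, exploiting that $\Gamma$ and $\gamma$ are disjoint compact curves with $\mathrm{dist}(\Gamma,\gamma)>0$.

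First I would dispatch \textbf{compactness}. Since $\gamma$ lies at positive distance from $\Gamma$, the single layer potential $\SL{\Gamma}\hat q$ is harmonic, hence real-analytic, in an open neighbourhood $\mathcal V$ of $\gamma$ disjoint from $\Gamma$. Interior estimates for harmonic functions then show that $\hat q\mapsto\SL{\Gamma}\hat q|_{\mathcal V'}$ is bounded from $H^{-1/2}(\Gamma)$ into $C^k(\mathcal V')$ for every $k\in\mathbb N$ and every $\mathcal V'\subset\mathcal V$ containing $\gamma$ with closure in $\mathcal V$. Taking the trace on $\gamma$ and projecting onto $H(\gamma)$ produces an operator into $C^k(\gamma)$, which embeds compactly into $H^{1/2}(\gamma)$; hence $\mathsf K_\Gamma^\gamma$ is compact. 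The symmetric argument, in a neighbourhood of $\Gamma$ disjoint from $\gamma$, handles $\mathsf K_\gamma^\Gamma$.

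Next, for \textbf{the duality identity}, I would rewrite $\langle\mathsf K_\Gamma^\gamma q,p\ranglepp{\gamma}$ using Definition~\ref{inner} as $\langle\mathsf{Tr}^0_\gamma\SL{\Gamma}\hat q,\hat p\ranglemp{\gamma}$, and then drop the projection $\Pi_\gamma$, since it only subtracts a constant that pairs to zero against $\hat p\in\widehat H(\gamma)$. Because $\Gamma\cap\gamma=\emptyset$, the kernel $G(x-y)$ is smooth on $\Gamma\times\gamma$, so Fubini applies at once (first on smooth densities, then by density), interchanging the two integrations and yielding $\langle\hat q,\mathsf{Tr}_\Gamma\SL{\gamma}\hat p\ranglemp{\Gamma}$. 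By Lemma~\ref{porj_nomore}, $\mathsf{Tr}_\Gamma\SL{\gamma}\hat p=\mathsf{Tr}^0_\Gamma\SL{\gamma}\hat p=\mathsf K_\gamma^\Gamma p$, and translating back via Definition~\ref{inner} gives the asserted relation.

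For \textbf{injectivity of $\mathsf K_\Gamma^\gamma$}, I would suppose $\mathsf K_\Gamma^\gamma q=0$ and set $w:=\SL{\Gamma}\hat q$; then $w$ is some constant $c$ on $\gamma$, so the maximum principle in $\omega$ gives $w\equiv c$ in $\omega$, and unique continuation of the harmonic function $w$ inside the connected open set $\Omega$ extends this to $w\equiv c$ on $\Omega$. The trace on $\Gamma$ is then $q=c$, and $q\in H(\Gamma)$ forces $c=\langle\hat{\mathsf e}_\Gamma,c\cdot 1\ranglemp{\Gamma}=0$. For $\mathsf K_\gamma^\Gamma$, if $\mathsf K_\gamma^\Gamma p=0$, then by Lemma~\ref{porj_nomore} the trace of $v:=\SL{\gamma}\hat p$ on $\Gamma$ already lies in $H(\Gamma)$ and hence vanishes; the asymptotics~\eqref{asymptSC} together with $\hat p\in\widehat H(\gamma)$ give $v(x)\to 0$ at infinity, and an energy computation in the exterior (or the exterior maximum principle) yields $v\equiv 0$ on $\mathbb R^2\setminus\overline\Omega$. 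Because there is no density on $\Gamma$, both $v$ and $\partial_n v$ vanish on $\Gamma$ from the interior side, and unique continuation propagates $v\equiv 0$ to $\Omega\setminus\overline\omega$; finally the maximum principle in $\omega$ (using continuity of the trace of the single layer across $\gamma$) gives $v\equiv 0$ throughout, so $\hat p=[\partial_n v]_\gamma=0$. Once both operators are injective, \textbf{dense range} of each is immediate from the duality identity: $\mathrm{Range}(\mathsf K_\Gamma^\gamma)^\perp=\mathrm{Ker}(\mathsf K_\gamma^\Gamma)=\{0\}$, and symmetrically.

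I expect the main obstacle to be the injectivity of $\mathsf K_\gamma^\Gamma$, where one must chain three distinct arguments --- exterior maximum principle with decay at infinity, unique continuation across $\Gamma$ from vanishing Cauchy data, and interior maximum principle in $\omega$ --- and check that the available regularity suffices at each juncture for them to apply.
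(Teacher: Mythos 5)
Your proposal is correct, and on three of the four points it follows the paper's own route: compactness via smoothness of the single layer potential away from its supporting curve (the paper simply cites \cite[Proposition 13.5.8]{Tucsnak:2009aa} where you invoke interior estimates plus a compact embedding), the duality identity via Fubini on the smooth kernel $G(x-y)$ for bounded densities followed by a density argument (your observation that the projection $\Pi_\gamma$ can be dropped because constants pair to zero against $\hat p\in\widehat H(\gamma)$ is exactly what the paper uses implicitly), and injectivity of $\mathsf K_\Gamma^\gamma$ via unique continuation in $\Omega$ forcing $\SL{\Gamma}\hat q$ to be constant, hence $q=0$. The genuine difference is in how the remaining claims are obtained: the paper proves \emph{only} the injectivity of $\mathsf K_\Gamma^\gamma$ and then gets density of the range of $\mathsf K_\gamma^\Gamma$ from the adjoint relation $\overline{{\rm Ran}\,\mathsf K_\gamma^\Gamma}=({\rm Ker}\,\mathsf K_\Gamma^\gamma)^\perp$ (the displayed formula in the paper omits the orthogonal complement), leaving the injectivity of $\mathsf K_\gamma^\Gamma$ and density of ${\rm Ran}\,\mathsf K_\Gamma^\gamma$ tacit; you instead prove injectivity of $\mathsf K_\gamma^\Gamma$ directly, by chaining exterior Dirichlet uniqueness (using the decay from \eqref{asymptSC} and $\hat p\in\widehat H(\gamma)$, after Lemma~\ref{porj_nomore} upgrades ${\mathsf{Tr}}^0_\Gamma$ to ${\mathsf{Tr}}_\Gamma$), real-analytic continuation across $\Gamma$ into $\Omega\setminus\overline\omega$, and the interior problem in $\omega$ together with the jump relation \eqref{eq:jump}. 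This costs more work but buys a fully self-contained verification of all four assertions --- a real gain, since the case of $\mathsf K_\gamma^\Gamma$ is \emph{not} literally symmetric to that of $\mathsf K_\Gamma^\gamma$ (the interior unique-continuation argument has no direct analogue and must be replaced by your exterior argument), which is precisely what the paper glosses over. One cosmetic repair: on a merely Lipschitz curve $\gamma$ the space $C^k(\gamma)$ is not the natural target; land instead in Lipschitz traces, so that $C^{0,1}(\gamma)\hookrightarrow H^1(\gamma)\hookrightarrow H^{\frac12}(\gamma)$ with the last embedding compact, which is all your compactness argument needs.
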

\begin{proof}
The compactness follows from the regularity of the single layer potential away from the boundary, combined to \cite[Proposition 13.5.8]{Tucsnak:2009aa}. 

Addressing the symmetry property, consider $q\in L^\infty(\Gamma)\cap H(\Gamma)$ and $p\in L^\infty(\gamma)\cap H(\gamma)$. We can write that:
$$\langle \mathsf K_\Gamma^\gamma q,p\ranglepp{\gamma}=\int_\gamma\int_\Gamma G(x-y)\hat q(y)\hat p(x)\,{\rm d}\sigma_y{\rm d}\sigma_x=\int_\Gamma\int_\gamma G(x-y)\hat q(x)\hat p(y)\,{\rm d}\sigma_y{\rm d}\sigma_x
= \langle \mathsf K_\gamma^\Gamma p,q\ranglepp{\Gamma},$$
and the conclusion follows by density. 

Assume now that $q\in H(\Gamma)$ is such that $ \mathsf K_\Gamma^\gamma q=0$. By the unique continuation property for harmonic functions, it means that $\mathscr S_\Gamma\hat q$ 
is constant in $\Omega$ and hence that $q=0$. Since 
$$\overline{{\rm Ran}\,\mathsf K_\gamma^\Gamma}={\rm Ker} \, \mathsf K_\Gamma^\gamma,$$
we get the density result and the proof is completed.
\end{proof}

\begin{prop}
\label{prop:contract_K}
The norms of the operators  $\mathsf K_\Gamma^\gamma$ and $\mathsf K_\gamma^\Gamma$ are strictly less that 1.
\end{prop}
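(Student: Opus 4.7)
The plan is to exploit Lemma~\ref{LEM:min} together with the compactness of the operators proved in Proposition~\ref{prop:symK}. The key observation is that $\mathscr S_\Gamma \hat q$ is a candidate in the variational characterization of $\|\mathsf K_\Gamma^\gamma q\|_{\frac{1}{2},\gamma}$ as a minimum over $W^1_0(\mathbb R^2)$.

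First I would prove the non-strict bound $\|\mathsf K_\Gamma^\gamma q\|_{\frac{1}{2},\gamma}\leq \|q\|_{\frac{1}{2},\Gamma}$ for every $q\in H(\Gamma)$. Indeed, by definition $p:=\mathsf K_\Gamma^\gamma q={\mathsf Tr}^0_\gamma(\mathscr S_\Gamma\hat q)$, so the function $u:=\mathscr S_\Gamma\hat q$ belongs to $W^1_0(\mathbb R^2)$ and satisfies ${\mathsf Tr}^0_\gamma u = p$. By Lemma~\ref{LEM:min}, $\|p\|_{\frac{1}{2},\gamma}\leq \|u\|_{W^1_0(\mathbb R^2)}=\|q\|_{\frac{1}{2},\Gamma}$, hence $\|\mathsf K_\Gamma^\gamma\|\leq 1$.

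Next I would analyze the equality case: if $\|\mathsf K_\Gamma^\gamma q\|_{\frac{1}{2},\gamma}=\|q\|_{\frac{1}{2},\Gamma}$, then $u=\mathscr S_\Gamma \hat q$ achieves the minimum in Lemma~\ref{LEM:min}. By the uniqueness of the minimizer, $\mathscr S_\Gamma\hat q=\mathscr S_\gamma\hat p$ in all of $\mathbb R^2$. Looking at the jump of the normal derivative across $\Gamma$ and using that $\gamma$ and $\Gamma$ are disjoint (so $\mathscr S_\gamma\hat p$ is smooth across $\Gamma$), the jump condition \eqref{eq:jump} yields $\hat q = [\partial_n\mathscr S_\Gamma\hat q]_\Gamma = [\partial_n\mathscr S_\gamma\hat p]_\Gamma = 0$, so $q=0$.

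Finally, I would upgrade the non-strict bound to a strict one by invoking compactness. Since $\mathsf K_\Gamma^\gamma$ is compact by Proposition~\ref{prop:symK}, the self-adjoint compact operator $(\mathsf K_\Gamma^\gamma)^*\mathsf K_\Gamma^\gamma$ attains its largest eigenvalue $\|\mathsf K_\Gamma^\gamma\|^2$ at some nonzero $q\in H(\Gamma)$. If $\|\mathsf K_\Gamma^\gamma\|=1$, this $q$ would realize equality in the bound above, forcing $q=0$, a contradiction. Thus $\|\mathsf K_\Gamma^\gamma\|<1$. The same bound for $\mathsf K_\gamma^\Gamma$ follows immediately from the symmetry identity of Proposition~\ref{prop:symK}, which shows that $\mathsf K_\gamma^\Gamma$ is the adjoint of $\mathsf K_\Gamma^\gamma$ for the inner products of Definition~\ref{inner}, and adjoint operators share the same operator norm. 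The main subtlety is the need for compactness to upgrade $\leq 1$ to $<1$: without compactness, a strict inequality in the equality case would not suffice to rule out a sequence of densities whose operator quotients tend to $1$.
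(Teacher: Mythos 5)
Your proof is correct and follows essentially the same route as the paper's: the non-strict bound $\|\mathsf K_\Gamma^\gamma q\|_{\frac{1}{2},\gamma}\leqslant\|q\|_{\frac{1}{2},\Gamma}$ via Lemma~\ref{LEM:min} with the candidate $\mathscr S_\Gamma\hat q$, norm attainment by compactness, and the equality case forcing $\mathscr S_\Gamma\hat q=\mathscr S_\gamma\hat p$ in all of $\mathbb R^2$ so that the jump relation \eqref{eq:jump} yields $\hat q=0$, a contradiction. Your two refinements---justifying norm attainment through the top eigenvalue of the compact self-adjoint operator $(\mathsf K_\Gamma^\gamma)^*\mathsf K_\Gamma^\gamma$, and transferring the strict bound to $\mathsf K_\gamma^\Gamma$ by the adjointness identity of Proposition~\ref{prop:symK}---are correct and merely make explicit two steps the paper leaves implicit.
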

\begin{proof}
According to Lemma~\ref{LEM:min}  we have, for every $q\in H(\Gamma)$:
$$\|\mathsf K_{\Gamma}^{\gamma} q\|_{\frac{1}{2},\gamma}=\inf\left\{\|u\|_{W^1_0(\mathbb R^2)}\,:\,u\in W^1_0(\mathbb R^2) \text{ and } {\mathsf Tr}^0_{\gamma}u={\mathsf Tr}^0_{\gamma}(\mathscr S_{\Gamma} \hat q)\right\}.$$
We deduce that $\|\mathsf K_{\Gamma}^{\gamma} q\|_{\frac{1}{2},\gamma}\leqslant \|\mathscr S_{\Gamma} \hat q\|_{W^1_0(\mathbb R^2)}=\|q\|_{\frac{1}{2},\Gamma}$ and the norm of $\mathsf K_{\Gamma}^{\gamma}$ 
is no greater than 1. 

The operator $\mathsf K_{\Gamma}^{\gamma}$ being compact, its norm is achieved by some $q_{\Gamma}\in H(\Gamma)$. If 
$\| \mathsf K_{\Gamma}^{\gamma} q_{\Gamma}\|_{\frac{1}{2},\gamma}=\|q_{\Gamma}\|_{\frac{1}{2},\Gamma}$, we would have, according to Lemma~\ref{LEM:min}:
$$
\mathscr S_{\gamma} \hat q_{\gamma}=\mathscr S_{\Gamma} \hat q_{\Gamma},\qquad \text{in }\mathbb R^{2},
$$
where $q_{\gamma}:=\mathsf K_{\Gamma}^{\gamma} q_\Gamma$. 
This identity implies that $\hat q_{\Gamma} = [\partial_n( \mathscr S_{\gamma} \hat q_{\gamma})]_{\Gamma}=0$, yielding the expected contradiction.
%Define now:
%\begin{equation}
%\label{def:min}
%k_{\Gamma}^{\gamma}:=\sup\{\|\mathsf K_{\Gamma}^{\gamma} q\|_{\frac{1}{2},\gamma}\,:\, q\in H(\Gamma),\,\|q\|_{\frac{1}{2},\Gamma}= 1\},
%\end{equation}
%and consider $(q_n)_n$, a minimizing sequence, which can be assumed to be weakly convergent in $H({\Gamma})$. The operator $\mathsf K_{\Gamma}^{\gamma}$ being compact, the supremum 
%in \eqref{def:min} is actually a maximum achieved by some $q^\ast\in H(\Gamma)$. If $k_{\Gamma}^{\gamma}=1$, one would have $\| \mathsf K_{\Gamma}^{\gamma} q^\ast\|_{\frac{1}{2},\gamma}=\|q^\ast\|_{\frac{1}{2},\Gamma}$ and then, according to Lemma~\ref{LEM:min}:
%$$
%\mathscr S_{\gamma}\widehat{\mathsf K_{\Gamma}^{\gamma} q^\ast}=\mathscr S_{\Gamma} \hat q^\ast,
%$$
%where we have set, with obvious notation, $\widehat{\mathsf K_{\Gamma}^{\gamma} q^\ast}:=\mathsf S_{\gamma}^{-1}\mathsf K_{\Gamma}^{\gamma} q$. This identity means that $\mathscr S_{\Gamma} \hat q^\ast$ is harmonic in $\mathbb R^2$ and therefore that $q^\ast=0$, yielding the desired contradiction.
\end{proof}

\subsection{Integral formulation and well-posedness}
%================================================================
Let us go back to Problem \eqref{main_problem}. Without loss of generality, let us assume from now on that  the diameter of $\Omega$ is less than 1  (otherwise, it suffices to rescale the problem), which implies in particular that ${\rm Cap}(\Gamma)< 1$ and ${\rm Cap}(\gamma)< 1$ (see \cite[p.~143]{Ste08b} and references therein). 

\begin{prop}
\label{prop:RI_SC}
For every $f\in H(\Gamma)$, denote by $(u^f,c^f)\in H^1(\Omega\setminus\overline{\omega})\times\mathbb R$ the unique solution of System \eqref{main_problem}. The function $u^f$  can be
represented as a superposition of single layer potentials as follows:
\begin{equation}
\label{def_uf}
u^f=\mathscr S_{\gamma}\hat p+\mathscr S_\Gamma \hat q,
\end{equation}
where $\hat p\in \widehat H(\gamma)$ and $\hat q\in H^{-\frac{1}{2}}(\Gamma)$ solve the
following system of coupled integral equations on the boundaries $\gamma$ and
$\Gamma$:
\begin{subequations}
\label{eq:EIpq}
\begin{alignat}{3}
 p+\mathsf{Tr}_{\gamma}(\mathscr S_\Gamma \hat q)&=c^f&\qquad&\text{on
}\gamma \label{eq:EIpq_11} \\
\mathsf{Tr}_{\Gamma}(\mathscr S_{\gamma}\hat p)+ q&=f&&\text{on
}\Gamma.\label{eq:EIpq_12}
\end{alignat}
\end{subequations}
\end{prop}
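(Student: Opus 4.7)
The plan is to construct the two densities $\hat p$ and $\hat q$ by first producing a harmonic extension of $u^f$ to the whole plane and then reading off $\hat p$ and $\hat q$ as the jumps of the normal derivative across $\gamma$ and $\Gamma$ via the jump relations \eqref{eq:jump}. Define $U:\mathbb R^2\to\mathbb R$ piecewise by $U=c^f$ in $\omega$, $U=u^f$ in $\Omega\setminus\overline\omega$, and $U=U_e$ in $\mathbb R^2\setminus\overline\Omega$, where $U_e$ is the unique element of $W^1_0(\mathbb R^2\setminus\overline\Omega)$ which is harmonic with trace $f$ on $\Gamma$. The assumption ${\rm Cap}(\Gamma)<1$ made right before the statement ensures this exterior Dirichlet problem is well posed, so $U$ belongs to $W^1_0(\mathbb R^2)$, is continuous across $\gamma$ (both sides equal $c^f$) and across $\Gamma$ (both sides equal $f$), and is harmonic off $\gamma\cup\Gamma$.

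Now set $\hat p:=[\partial_n U]_\gamma$ and $\hat q:=[\partial_n U]_\Gamma$, with the sign conventions of \eqref{eq:jump}. Since $U\equiv c^f$ in $\omega$, the interior normal derivative on $\gamma$ vanishes, so $\hat p$ reduces (up to sign) to $\partial_n u^f|_\gamma$; the zero circulation condition \eqref{free_circ} then gives $\langle \hat p,1\rangle_\gamma=0$, i.e.\ $\hat p\in\widehat H(\gamma)$. No such vanishing is expected for $\hat q$, so only $\hat q\in H^{-\frac{1}{2}}(\Gamma)$.

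Consider then $V:=\mathscr S_\gamma\hat p+\mathscr S_\Gamma\hat q$: by \eqref{eq:jump} it is harmonic on $\mathbb R^2\setminus(\gamma\cup\Gamma)$, continuous across each curve, and has exactly the same jumps of normal derivative as $U$. Therefore $U-V$ extends to a harmonic function on all of $\mathbb R^2$; matching the asymptotic behavior at infinity via \eqref{asymptSC} (noting that $U$ has the logarithmic coefficient $-\frac{1}{2\pi}\langle\hat q,1\rangle$ coming from $U_e$, and that $\langle\hat p,1\rangle=0$) shows $U-V$ is constant, hence $U=V$ as elements of $W^1_0(\mathbb R^2)$. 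Restricting to $\Omega\setminus\overline\omega$ yields \eqref{def_uf}, and taking traces on $\gamma$ and $\Gamma$ produces precisely \eqref{eq:EIpq_11} and \eqref{eq:EIpq_12}.

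The only delicate point is the infinity argument in the comparison of $U$ and $V$; a cleaner alternative is to apply Green's representation formula in each of the three regions $\omega$, $\Omega\setminus\overline\omega$ and $\mathbb R^2\setminus\overline\Omega$ and sum. The double layer contributions cancel in pairs thanks to the continuity of $U$ across $\gamma$ and $\Gamma$, leaving exactly $\mathscr S_\gamma\hat p+\mathscr S_\Gamma\hat q$. One must also track the opposite sign conventions for the normal on $\gamma$ used in the single layer framework (outward from $\omega$) and in the statement of the main problem (pointing into $\omega$); these flips are harmless once absorbed in the definitions of $\hat p$ and $\hat q$.
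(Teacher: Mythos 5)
Your route is genuinely different from the paper's: the paper simply invokes the invertibility of the single layer operator (\cite[Theorem 8.16]{McL00}, applicable since ${\rm Cap}(\Gamma)<1$ and ${\rm Cap}(\gamma)<1$) to represent $u^f$ as a superposition of two single layer potentials, and then checks $\langle\hat p,1\rangle=0$ from \eqref{free_circ} and the jump relation \eqref{eq:jump}; your explicit identification of $\hat p$ and $\hat q$ as the jumps of a global harmonic extension is a more self-contained alternative, and your treatment of $\hat p$ coincides with the paper's. However, there is a genuine gap at the final step. The space $W^1_0(\mathbb R^2)$ is a \emph{quotient} by constants, so your comparison argument only yields $U=V+C$ for some $C\in\mathbb R$; taking traces then gives $p+\mathsf{Tr}_{\gamma}(\mathscr S_\Gamma \hat q)=c^f-C$ and $\mathsf{Tr}_{\Gamma}(\mathscr S_{\gamma}\hat p)+q=f-C$, which is \emph{not} \eqref{eq:EIpq}. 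The exact right-hand sides matter: the proof of Theorem~\ref{first_decom} uses \eqref{eq:EIpq_12} with $f$ exactly to derive $({\rm Id}-\mathsf K)q=f$, so the constant cannot be ignored. Moreover your parenthetical about asymptotics is wrong as stated: $U_e$ has finite Dirichlet energy and therefore carries \emph{no} logarithmic term; what the matching actually shows is that $\langle \hat q,1\rangle_\Gamma=0$ (this also follows from flux balance: the interior flux of $U$ through $\Gamma$ vanishes by \eqref{free_circ}, the exterior flux vanishes by finite energy) and then, by Liouville for entire harmonic functions of sublogarithmic growth, that $U-V\equiv C$ where $C=c_\infty$ is the value of $U_e$ at infinity --- which is \emph{not} zero for a general $f\in\Hp{\Gamma}$.

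Two repairs are available, and you need one of them. (i) Show $C=0$ using the standing hypothesis $f\in H(\Gamma)$: the value at infinity of the finite-energy harmonic extension of $f$ equals $\langle \hat{\mathsf e}_\Gamma, f\rangle$, because the equilibrium density is the harmonic measure of the exterior domain at infinity; this pairing vanishes precisely because $f\in H(\Gamma)$ --- note this is the only place where that hypothesis enters, and you never invoke it. (ii) Absorb the constant into $\hat q$: replace $\hat q$ by $\hat q+\lambda\hat{\mathsf e}_\Gamma$ with $\lambda$ chosen so that $\lambda\,\mathsf S_\Gamma\hat{\mathsf e}_\Gamma=C$, which is possible since $\mathsf S_\Gamma\hat{\mathsf e}_\Gamma=-\frac{1}{2\pi}\log{\rm Cap}(\Gamma)\neq 0$; since $\mathscr S_\Gamma\hat{\mathsf e}_\Gamma$ is constant on all of $\overline\Omega$ by the maximum principle, this adds exactly $C$ to $V$ in $\Omega\setminus\overline\omega$, on $\gamma$ and on $\Gamma$, restoring \eqref{def_uf} and \eqref{eq:EIpq}; the statement only requires $\hat q\in H^{-\frac12}(\Gamma)$, so losing $\langle\hat q,1\rangle=0$ is harmless. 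Finally, be aware that your ``cleaner alternative'' via Green's formula in the three regions has the same defect: in two dimensions, the exterior representation formula for a finite-energy harmonic function carries the constant $c_\infty$ (the formula quoted in Lemma~\ref{porj_nomore} is valid only because there $w_e=O(|x|^{-1})$), so the constant reappears and must again be handled by (i) or (ii).
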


\begin{proof}
It is a  consequence of \cite[Theorem 8.16]{McL00}
that the unique solution to System \eqref{main_problem} can be written as 
a superposition of two single layer potentials respectively supported on $\Gamma$ and $\gamma$ and respectively associated with the densities $(\hat p,\hat q)\in H^{-\frac{1}{2}}(\gamma)\times H^{-\frac{1}{2}}(\Gamma)$, as in \eqref{def_uf}. 
It only remains to verify that $p$ is in fact in  $\widehat H(\gamma)$, i.e. that
$\langle \hat p,1\ranglemp{\gamma}=0$. This is a straightforward consequence of \eqref{free_circ} and the jump relation for the normal derivative of the single layer potential. 
\end{proof}
%================================================================
\section{The reconstruction formula}
\label{sect:MAIN}
%================================================================

Going back to the DtN operator $\Lambda_\gamma$ of problem \eqref{main_problem}, and due to \eqref{free_circ}, we have by Green's formula
$$
\langle \partial_n u^f, 1\ranglemp{{\Gamma}} = -\langle \partial_n u^f, 1\ranglemp{{\gamma}}  =0,
$$
which shows that  $\Lambda_\gamma$ is valued in ${\widehat H}({\Gamma})$. Considering data $f\in {H}({\Gamma})$, we can thus define the DtN operator $\Lambda_\gamma$ as follows:
\begin{equation}
\label{D_to_N}
\Lambda_\gamma:f\in H(\Gamma)\longmapsto \partial_nu^f\in {\widehat
	H}({\Gamma}).
\end{equation}
In the case where  $\omega=\varnothing$, we will denote respectively by $u_0^f$ and $\Lambda_0$ the solution $u^f$ and the DtN $\Lambda_\gamma$. Note that we have in particular 
$$u^f_0=\mathscr S_\Gamma \hat f.$$

\subsection{Factorization of the DtN map}
%================================================================

\begin{theorem}
\label{first_decom}
The two following bounded linear operators in $H(\Gamma)$:
$$
\mathsf R:=\mathsf S_\Gamma(\Lambda_\gamma-\Lambda_0)\qquad\text{and}\qquad \mathsf K:=\mathsf K_\gamma^\Gamma\mathsf K_\Gamma^\gamma,
$$
satisfy the following equivalent identities:
\begin{equation}
\label{factor:main}
\mathsf R=({\rm Id}-\mathsf K)^{-1}{\mathsf K},
\qquad\qquad
\mathsf K=({\rm Id}+\mathsf R)^{-1}\mathsf R.
\end{equation}
\end{theorem}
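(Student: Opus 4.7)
The plan is to first reduce the integral system of Proposition~\ref{prop:RI_SC} to a single scalar equation on $\Gamma$ for the density $q$, and then to identify $\mathsf R f$ directly with $q-f$ via an exterior-uniqueness argument. The two displayed identities in \eqref{factor:main} will then follow from elementary algebra.

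I would begin by projecting \eqref{eq:EIpq_11} onto $H(\gamma)$ using $\Pi_\gamma$. Since $p\in H(\gamma)$ (so $\Pi_\gamma p = p$), since constants belong to the kernel of $\Pi_\gamma$ (so $\Pi_\gamma c^f = 0$), and since $\Pi_\gamma\mathsf{Tr}_\gamma(\SL{\Gamma}\hat q) = \mathsf{Tr}^0_\gamma(\SL{\Gamma}\hat q) = \mathsf K_\Gamma^\gamma q$, this equation becomes $p + \mathsf K_\Gamma^\gamma q = 0$ in $H(\gamma)$. For \eqref{eq:EIpq_12}, Lemma~\ref{porj_nomore} guarantees $\mathsf{Tr}_\Gamma(\SL{\gamma}\hat p)\in H(\Gamma)$, so this equation reads $q + \mathsf K_\gamma^\Gamma p = f$ in $H(\Gamma)$; as a byproduct, $q\in H(\Gamma)$, i.e.\ $\hat q\in\widehat H(\Gamma)$. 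Eliminating $p$ yields the scalar equation $({\rm Id}-\mathsf K)q = f$. Because $\|\mathsf K\|\leqslant \|\mathsf K_\gamma^\Gamma\|\,\|\mathsf K_\Gamma^\gamma\|<1$ by Proposition~\ref{prop:contract_K}, the operator ${\rm Id}-\mathsf K$ is invertible (Neumann series) and $q = ({\rm Id}-\mathsf K)^{-1}f$.

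The heart of the argument, and what I expect to be the main obstacle, is matching $\mathsf R f$ with $q - f$. The global representation $\SL{\gamma}\hat p + \SL{\Gamma}\hat q$ extends $u^f$ harmonically to $\mathbb R^2\setminus\overline\Omega$; since $\hat p\in\widehat H(\gamma)$ and $\hat q\in\widehat H(\Gamma)$, the logarithmic terms in \eqref{asymptSC} vanish, so this extension is $O(|x|^{-1})$ at infinity, and its exterior trace on $\Gamma$ equals $\mathsf{Tr}_\Gamma(\SL{\gamma}\hat p) + q = f$ by \eqref{eq:EIpq_12}. The same holds for $u_0^f = \SL{\Gamma}\hat f$. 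Uniqueness for the exterior Dirichlet problem in the weighted space $W^1_0$ then forces $u^f = u_0^f$ on $\mathbb R^2\setminus\overline\Omega$. Computing the exterior normal derivative on $\Gamma$ of this identity and using the Neumann jump \eqref{eq:jump} for $\SL{\Gamma}$ (while $\SL{\gamma}\hat p$ is smooth across $\Gamma$), one obtains
\[
\Lambda_\gamma f - \hat q \;=\; \Lambda_0 f - \hat f \qquad\text{in }\widehat H(\Gamma),
\]
i.e.\ $\widehat{(\Lambda_\gamma-\Lambda_0)f} = \hat q - \hat f$. Applying the isomorphism $\mathsf S_\Gamma$ produces $\mathsf R f = q - f$.

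Combining everything, $\mathsf R f = ({\rm Id}-\mathsf K)^{-1}f - f = ({\rm Id}-\mathsf K)^{-1}\mathsf K f$, which is the first identity in \eqref{factor:main}. Since ${\rm Id}+\mathsf R = ({\rm Id}-\mathsf K)^{-1}$, the operator ${\rm Id}+\mathsf R$ is invertible with inverse ${\rm Id}-\mathsf K$; multiplying the first identity on the left by ${\rm Id}-\mathsf K$ then yields the second identity $({\rm Id}+\mathsf R)^{-1}\mathsf R = \mathsf K$. The sensitive ingredient throughout is the exterior-uniqueness step, which relies on the decay at infinity afforded by the zero-mean conditions on $\hat p$ and $\hat q$, themselves secured by the reduction performed in the first step.
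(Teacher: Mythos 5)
Your proposal is correct and follows essentially the same route as the paper's own proof: projecting the integral system \eqref{eq:EIpq} to obtain $({\rm Id}-\mathsf K)q=f$, identifying $\mathsf R f = q-f$ through uniqueness of the exterior Dirichlet problem together with the jump relation \eqref{eq:jump}, and concluding by elementary algebra. Your explicit observation that the zero-mean conditions on $\hat p$ and $\hat q$ are what secure the $O(|x|^{-1})$ decay at infinity is left implicit in the paper but is precisely the mechanism its exterior-uniqueness step relies on.
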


\begin{proof} Given $f$ in $H(\Gamma)$, let $(\hat p,\hat q)\in \widehat H(\gamma)\times \Hm{\Gamma}$ be the solution of System \eqref{eq:EIpq}.
% as:
%\begin{subequations}
%\label{eq:EIpq_1}
%\begin{alignat}{3}
%p+\mathsf{Tr}_{\gamma}(\mathscr S_\Gamma \hat q)&=c^f&\qquad&\text{on
%}\gamma \label{eq:EIpq_11}\\
%\mathsf{Tr}_{\Gamma}(\mathscr S_{\gamma}\hat p)+  q&=f&&\text{on
%}\Gamma,\label{eq:EIpq_12}
%\end{alignat}
%\end{subequations}
According to Lemma~\ref{porj_nomore}, $\mathsf{Tr}_{\Gamma}(\mathscr S_{\gamma}\hat p)\in  H(\Gamma)$ and hence $\mathsf{Tr}_{\Gamma}(\mathscr S_{\gamma}\hat p)={\mathsf K}_\gamma^\Gamma p$. Since $f\in H(\Gamma)$, we deduce from \eqref{eq:EIpq_12} 
that $q=f-{\mathsf K}_\gamma^\Gamma p\in H(\Gamma)$. Applying the projector $\Pi_\gamma$ to \eqref{eq:EIpq_11}, we obtain the following system:
\begin{subequations}
\begin{alignat}{3}
p+ {\mathsf K}_\Gamma^\gamma q&=0&\qquad&\text{on
}\gamma \label{eq:EIpq_21}\\
{\mathsf K}_\gamma^\Gamma p+  q&=f&&\text{on
}\Gamma.\label{eq:EIpq_22}
\end{alignat}
\end{subequations}
Eliminating $p$, it follows that $({\rm Id}-\mathsf K)q=f$ and hence $({\rm Id}-\mathsf K)(q-f)={\mathsf K}f$. The operator $\mathsf K$ being a contraction (see Propositon~\ref{prop:contract_K}), we end up with:
\begin{equation}
\label{lahaut}
q-f=({\rm Id}-\mathsf K)^{-1}{\mathsf K}f.
\end{equation}
On the other hand, we have
\begin{equation}\label{eq:diffDtn}
(\Lambda_\gamma - \Lambda_0)f =\partial_{n}(\mathscr S_\Gamma \hat q + \mathscr S_\gamma \hat p)|_\Gamma -\partial_{n}(\mathscr S_\Gamma \hat f)|_\Gamma.
\end{equation}
But outside $\Gamma$, the two single layer potentials $\mathscr S_\Gamma \hat f$ and $\mathscr S_\Gamma \hat q + \mathscr S_\gamma \hat p$  both solve the well-posed Dirichlet exterior boundary value problem:
\begin{alignat*}{3}
-\Delta u&=0&\quad&\quad\text{in }\mathbb R^2\setminus \overline{\Omega},\\
u&=f&&\quad\text{on }\Gamma,\\
u&=O(|x|^{-1})&&\quad |x|\to+\infty.
\end{alignat*}
Hence $ (\mathscr S_\Gamma \hat q + \mathscr S_\gamma \hat p) = \mathscr S_\Gamma \hat f$ in $\mathbb R^2\setminus\overline{\Omega}$, and in particular we can rewrite \eqref{eq:diffDtn} as
$$
(\Lambda_\gamma - \Lambda_0)f = \left[ \partial_n(\mathscr S_\Gamma \hat q + \mathscr S_\gamma \hat p)\right]_\Gamma -[ \partial_n(\mathscr S_\Gamma \hat f)]_\Gamma  =  \hat q -\hat f,
$$
where the last equality follows from the jump relation \eqref{eq:jump}. Comparing this relation and \eqref{lahaut}, we obtain that $\mathsf S_\Gamma(\Lambda_\gamma - \Lambda_0)f = ({\rm Id}-\mathsf K)^{-1}{\mathsf K}f$, which is exactly the first relation in \eqref{factor:main}. The second relation follows easily. 
\end{proof}
The first equation in \eqref{factor:main} can be seen as a factorization of the (known) DtN operator $\Lambda_\gamma-\Lambda_0$ in terms of the (unknown) boundary interaction operator $\mathsf K_\Gamma^\gamma$ and $\mathsf K_\gamma^\Gamma$. Similarly, the second equation in \eqref{factor:main} can be seen as a factorization of the boundary interaction operator $\mathsf K=\mathsf K_\gamma^\Gamma\mathsf K_\Gamma^\gamma$ in terms of the measurement operator $\mathsf R$ (which is entirely determined by the perturbed and unperturbed DtN maps and by the exterior boundary $\Gamma$). Using Proposition~\ref{prop:symK}, it is worth reformulating this second equation in a variational form:
\begin{equation}
\label{factor:main_varia}
\langle \mathsf K_\Gamma^\gamma f,\mathsf K_\Gamma^\gamma g\ranglepp{\gamma} = \langle ({\rm Id}+\mathsf R)^{-1}\mathsf R f,g\ranglepp{\Gamma},\qquad\forall\,f,g\in H(\Gamma).
\end{equation}
This identity constitutes the first step towards the reconstruction of the unknown boundary $\gamma$. Indeed, the bilinear form $\langle \mathsf K_\Gamma^\gamma \,\cdot,\mathsf K_\Gamma^\gamma \,\cdot\ranglepp{\gamma}$ turns out to encode the geometry of the inclusion, as shown in the next section.

\subsection{Harmonic polynomials and GPST}
%================================================================
Throughout the paper, we identify $x=(x_{1},x_{2})$ in $\mathbb R^2$ with the complex number $z=x_1+ix_2$.
\begin{definition}\label{def:PQ}
%When $N=2$ and 
For every $m\geqslant 1$, we define the harmonic polynomials of degree $m$:
$$P_1^m(x)=\Re\left(z^m\right)\qquad\text{and}\qquad P_2^m(x)=\Im\left(z^m\right).$$
%When $N=3$, there are $2m+1$ independent harmonic polynomials of degree $m$:
%$$P_\ell^m(x)=\rho^m Y_\ell^m(\theta,\phi),\qquad \ell=1,\ldots,2m+1,$$
%where $(\rho,\theta,\phi)$ are the spherical coordinates and $Y_\ell^m$ is the spherical harmonic function of degree $\ell$ and order $m$ (\AM{REF ICI}).
%For every $r\in \mathbb R^N$, we define:
We define as well
\begin{equation}\label{eq:Qm12}
Q_{1,\Gamma}^m(x):=P_\ell^m(x) + c_{1,\Gamma}^m\qquad\text{and}\qquad 
Q_{2,\Gamma}^m(x):=P_\ell^m(x) + c_{2,\Gamma}^m
\end{equation}
where the constant $c_{\ell,\Gamma}^m\in\mathbb R$, $\ell=1,2$, are chosen such that the trace of $Q_{\ell,\Gamma}^m$ on $\Gamma$ belongs to $H(\Gamma)$. \\
Finally, we set
\begin{equation}\label{eq:Qm}
Q_{\Gamma}^m:= Q_{1,\Gamma}^m+i Q_{2,\Gamma}^m.
\end{equation}
\end{definition}
%-------------------------------------------------------------------
The crucial point about these polynomials $Q_{\ell,\Gamma}^m$, $\ell=1,2$, lies in the fact that since they are harmonic, we have
$$
\mathsf K_{\Gamma}^{\gamma}(Q_{\ell,\Gamma}^m\big|_{\Gamma}) = Q_{\ell,\gamma}^m\big|_{\gamma},
$$
and hence, using these harmonic polynomials $Q_{\ell,\Gamma}^m$ in formula \eqref{factor:main_varia} (and using for simplicity the same notation for the functions and their traces on the boundaries $\gamma$ and $\Gamma$), we obtain that for all $m,m'\geqslant 1$ and all $\ell,\ell'=1,2$:
$$\langle \mathsf K_\Gamma^\gamma Q_{\ell,\Gamma}^m,\mathsf K_\Gamma^\gamma Q^{m'}_{\ell',\Gamma}\ranglepp{\Gamma}=\langle Q_{\ell,\gamma}^m, Q^{m'}_{\ell',\gamma}\ranglepp{\gamma}.
$$
\begin{rem}
The quantities $\langle Q^m_{\ell,\gamma}, Q^{m'}_{\ell',\gamma} \rangle_{\frac 12, \gamma}$ are strongly connected with the so-called Generalized P\'olya-Szeg\"o Tensors (GPST)  appearing in the high-order asymptotic expansion of the DtN map for small inclusions (see for instance the recent papers by Ammari {\it et al.} \cite{AmmGarKan14,AmmDenKan14} and references therein). Our definition is somehow different from theirs, as they use real polynomials $x^m$, while we use harmonic polynomials.
\end{rem}
%==============================================================================
\subsection{From the GPST to the geometry of the cavity: an explicit inversion formula}
In this section, we are going to see that the quantities $\langle Q_{\ell,\gamma}^m, Q^{m'}_{\ell',\gamma}\ranglepp{\gamma}$ for $m,m'\geqslant 1$ and $\ell,\ell'=1,2$, which can be deduced from the measurements (see \eqref{factor:main_varia}), contain all the necessary information to reconstruct the cavity. We can even say more: the geometric information of $\gamma$ is actually redundant in the GPST. As we shall see, the knowledge of the quantities $\langle Q_{\gamma}^m, Q^{1}_{\gamma}\ranglepp{\gamma}$ and $\langle Q_{\gamma}^m, \overline {Q^{1}_{\gamma}}\ranglepp{\gamma}$ suffices to reconstruct the cavity. More precisely, assume that the geometry of $\gamma$ is described through the conformal mapping 
$$
\phi:z\mapsto a_1 z+a_0+\sum_{m\leqslant -1} a_m z^m,
$$
that maps the exterior of the unit disk $D$ onto the exterior of $\omega$ (see the book of Pommerenke \cite[p.~5]{Pommerenke:1992aa} for the existence of such a mapping). In particular, $t\in]-\pi,\pi]\mapsto \phi(e^{it})$ provides a parameterization of $\gamma$. Notice that in this description, $|a_1|$ is the logarithmic capacity of $\gamma$ and can be chosen such that $a_1>0$. The coefficient $a_0$ is  the conformal center of $\omega$. With these notation, we have the following result. 

\begin{theorem}\label{thm:expl}
Let $(Q_{\gamma}^m)_{m\geqslant 1}$ be the complex harmonic polynomials defined by \eqref{eq:Qm12}-\eqref{eq:Qm}. Define the two following sequences of complex numbers ($1\leqslant m\leqslant+\infty$):
\begin{subequations}
\label{la_haut}
\begin{equation}\label{eq:mum}
\mu_{m}:=\frac12 \langle Q_\gamma^m, \overline{Q^{1}_\gamma}\ranglepp{\gamma} =\frac{1}{2}\langle Q^m_\Gamma,({\rm Id}+\mathsf R)^{-1}\mathsf R\overline {Q^1_\Gamma}\ranglepp{\Gamma},
\end{equation}
\begin{equation}\label{eq:num}
\nu_{m}:= \frac12\langle Q_\gamma^m, Q^{1}_\gamma\ranglepp{\gamma}=\frac{1}{2}\langle Q^m_\Gamma, ({\rm Id}+\mathsf R)^{-1}\mathsf R Q^1_\Gamma\ranglepp{\Gamma},
\end{equation}
with $\mathsf R:=\mathsf S_\Gamma(\Lambda_\gamma-\Lambda_0)$. Then, $\mu_1>0$ and we have the explicit formulae:
\begin{align}
a_1&=\left(\frac{\mu_1}{2\pi}\right)^{\frac{1}{2}}\qquad \qquad \qquad a_0=\frac{\mu_2}{2\mu_1}\\
\quad a_{-m}&=\mu_1^{-\frac{m}{2}}\sum_{\alpha\in\mathsf A_m} C_\alpha
\left(\frac{\mu_2}{\mu_1}\right)^{\alpha_0}\nu_{1}^{\alpha_1}\nu_{2}^{\alpha_2}\ldots \nu_m^{\alpha_m},\quad m\geqslant 1,
\end{align}
where 
\begin{equation}\label{eq:An}
\mathsf A_m:=\{\alpha\in \mathbb N^{m+1}\,:\,\alpha_0+2\alpha_1+3\alpha_2+\ldots+(m+1)\alpha_m=(m+1), \,\alpha_{0}\neq m+1\}
\end{equation}
and 
\begin{equation}\label{eq:Calpha}
C_\alpha:= \frac{(-1)^{|\alpha|+1}}{2^{\alpha_0}m} 
\frac{(2\pi)^{\frac{m}{2} - (\alpha_{1}+\dots+\alpha_{m}) }}{{1^{\alpha_1}2^{\alpha_2}\ldots m^{\alpha_m}}}.
\end{equation}
\end{subequations}
\end{theorem}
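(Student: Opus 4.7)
The approach is to pull back, via the conformal map $\phi$, the Dirichlet-energy representation of the GPST pairings to the exterior of the unit disk, where everything becomes explicit Fourier analysis on the unit circle. Starting from the isometry $\langle q,p\rangle_{\frac12,\gamma}=\int_{\mathbb R^2}\nabla(\mathscr S_\gamma\hat q)\cdot\nabla(\mathscr S_\gamma\hat p)\,dx$ (extended $\mathbb C$-bilinearly), I split this integral into an interior contribution on $\omega$ and an exterior one on $\mathbb R^2\setminus\overline{\omega}$, and transport the latter to $\{|\zeta|>1\}$ by conformal invariance. A classical potential-theoretic fact identifies $\hat{\mathsf e}_\gamma$ with the pullback of the normalized arc length $dt/(2\pi)$ under $\phi$; hence the constant $c^m$ in Definition~\ref{def:PQ} equals minus the zeroth Fourier coefficient of $\phi(e^{it})^m=\sum_{k\le m}g^m_k e^{ikt}$, whose Laurent support is bounded above by $k=m$ since $\phi$ itself has Laurent support bounded above by $k=1$.

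The computation then splits in two. Inside $\omega$, Wirtinger calculus gives $\nabla z^m\cdot\nabla z^{m'}\equiv 0$ and $\nabla z^m\cdot\nabla\bar z^{m'}=2mm'z^{m-1}\bar z^{m'-1}$. On $\{|\zeta|>1\}$, a polar-coordinate Fourier computation shows that two decaying harmonic functions with boundary Fourier coefficients $(c_k),(d_k)$ satisfy $\int\nabla V\cdot\nabla W=2\pi\sum_{k\ne 0}|k|c_kd_{-k}$ and $\int\nabla V\cdot\nabla\bar W=2\pi\sum|k|c_k\bar d_k$. Applied to $m'=1$, $\nu_m$ receives no interior contribution (bilinear pairing of holomorphic functions) and equals $\pi\bigl(\sum_{k=1}^m kg^m_k a_{-k}+g^m_{-1}a_1\bigr)$, while $\mu_m$ picks up both terms. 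For $m=1$, the interior term is $2|\omega|$, and combining with Gronwall's area formula $|\omega|=\pi(a_1^2-\sum_{k\ge 1}k|a_{-k}|^2)$ cancels the $a_{-k}$ dependence, leaving $\mu_1=2\pi a_1^2>0$ and $a_1=(\mu_1/2\pi)^{\frac12}$. For $m=2$, the first complex moment $\int_\omega z\,dx$ (evaluated via the complex Pompeiu--Stokes formula and the Laurent data of $\phi$) produces an analogous cancellation and yields $\mu_2=4\pi a_1^2 a_0=2\mu_1a_0$, hence $a_0=\mu_2/(2\mu_1)$.

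For the coefficients $a_{-m}$ with $m\ge 1$, the formula rests on a triangular structure: since $g^m_m=a_1^m$ and since $g^m_{-1}$ contains the unique summand $ma_1^{m-1}a_{-m}$ (the monomial with one factor $a_{-m}$ and $m-1$ factors $a_1$), the two ``extreme'' contributions $mg^m_m a_{-m}$ and $g^m_{-1}a_1$ each bring $ma_1^m a_{-m}$, whereas every other summand $kg^m_k a_{-k}$ with $1\le k<m$ depends only on $a_0,a_{-1},\ldots,a_{-(m-1)}$. Consequently $\nu_m=2\pi m a_1^m a_{-m}+R_m(a_0,a_{-1},\ldots,a_{-(m-1)})$ with $R_m$ a multinomial polynomial, and induction determines the $a_{-m}$ recursively. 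The main difficulty I foresee lies in the closed-form rearrangement: tracking the monomials in $R_m$ arising from the multinomial expansion of $\phi^m$, substituting $a_0=\mu_2/(2\mu_1)$ and $a_1=\sqrt{\mu_1/(2\pi)}$, and matching the resulting sum to the announced expression indexed by $\mathsf A_m$ with weights $C_\alpha$. A Fa\`a di Bruno-type bijection between multi-indices, or equivalently an application of the Lagrange inversion formula to the Laurent tail $\phi(\zeta)-a_0-a_1\zeta$, should furnish the clean combinatorial identification.
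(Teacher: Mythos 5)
Your computational groundwork is sound, and it takes a genuinely different route to the key lemma than the paper does: where the paper computes the pairings $\mu^{m,m'},\nu^{m,m'}$ via the jump relation $[\partial_n \mathscr S_\gamma \widehat Q^m]_\gamma$ and an explicit pullback of the exterior Dirichlet solution (yielding directly $\mu_m=2\pi a_1 a^m_1$ and $\nu_m=2\pi a_1 a^m_{-1}$, with $a^m_k$ the Laurent coefficients of $\phi^m$), you split the Dirichlet energy into interior and exterior parts and do Fourier analysis on $\{|\zeta|>1\}$. Your formulas are correct: your two-term expression for $\nu_m$ collapses to the paper's single term $2\pi a_1 a^m_{-1}$ because $\phi^m\phi'=(\phi^{m+1})'/(m+1)$ has vanishing residue, whence $\sum_{k\geqslant 1}k\,a^m_k a_{-k}=a_1 a^m_{-1}$; and your $\mu_1,\mu_2$ evaluations via the Gronwall area formula and the first complex moment do reproduce $\mu_1=2\pi a_1^2>0$ and $\mu_2=4\pi a_1^2a_0$ (the paper's route gets these without any conjugate-term cancellation, since $\phi^1_+$ and $\phi^2_+$ are explicit polynomials, but your cancellations are genuine).

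The genuine gap is in the final step, which is the actual content of the theorem: the \emph{closed-form} expression for $a_{-m}$ as a sum over $\mathsf A_m$ with the explicit constants $C_\alpha$. Your argument stops at the triangular recursion $\nu_m=2\pi m\,a_1^m a_{-m}+R_m(a_0,a_{-1},\dots,a_{-(m-1)})$, which only shows that the $a_{-m}$ are \emph{recursively determined}, and you defer the identification of the announced formula to an unspecified ``Fa\`a di Bruno-type bijection or Lagrange inversion'' that you do not carry out; unwinding a nonlinear recursion into that closed form is precisely the nontrivial combinatorial step. The paper avoids the recursion altogether by exploiting the symmetry between $\phi$ and $\phi^{-1}$: Cauchy's formula gives $\nu_m=2\pi a_1 a^m_{-1}=-2\pi a_1 m\,b_{-m}$, so the Laurent coefficients $b_{-m}$ of the \emph{inverse} map are \emph{linear} in the measured data, and the same residue computation with the roles of $\phi$ and $\phi^{-1}$ exchanged yields the universal identity $a_{-m}=-\frac{1}{m}\sum_{|\beta|=-1}b_{\beta_1}\cdots b_{\beta_m}$; reindexing this sum over the set $\mathsf B_m\simeq\mathsf A_m$ and substituting $b_1=1/a_1$, $b_0=-a_0/a_1$, $b_{-k}=-\nu_k/(2\pi a_1 k)$ produces the exponents $(-1)^{|\alpha|+1}$, $2^{-\alpha_0}$, $(2\pi)^{\frac m2-(\alpha_1+\cdots+\alpha_m)}$ and $1^{\alpha_1}2^{\alpha_2}\cdots m^{\alpha_m}$ in $C_\alpha$ in one stroke. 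Your instinct about Lagrange inversion applied to the inverse map is exactly the right one --- executed, it becomes the paper's proof --- but as written it is a conjecture, not a proof, so the theorem's formula remains underived in your proposal.
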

The rest of this section is devoted to the proof of this result. 

To simplify the forthcoming computation, we complete the sequence of complex numbers $(a_k)_{k\leqslant 1}$ by setting $a_k=0$ for $k\geqslant 2$. We denote $a^n_k$ ($n\in\mathbb N$, $k\in\mathbb Z$) the $k$th coefficients of the Laurent's series of $\phi^n$:
\begin{equation}
\label{def_an}
a_k^n=\sum_{|\alpha|=k}a_{\alpha_1}a_{\alpha_2}\ldots a_{\alpha_n},
\end{equation}
where the sum ranges over all the multi-indices $\alpha=(\alpha_1,\ldots,\alpha_n)\in\mathbb Z^n$ whose length $|\alpha|=\alpha_1+\ldots+\alpha_n$ is equal to $k$.
We also introduce the quantities:
$$
\mu^{m,m'} :=\frac12\langle Q_{\gamma}^m, \overline{ Q^{m'}_{\gamma}}\ranglepp{\gamma}
=\frac{1}{2}\langle Q^m_\Gamma,({\rm Id}+\mathsf R)^{-1}\mathsf R\overline {Q^{m'}_\Gamma}\ranglepp{\Gamma}$$
$$
\nu^{m,m'} :=\frac12\langle Q_{\gamma}^m, Q^{m'}_{\gamma}\ranglepp{\gamma}= \frac{1}{2}\langle Q^m_{\Gamma}, ({\rm Id}+\mathsf R)^{-1}\mathsf R Q^{m'}_{\Gamma}\ranglepp{\Gamma},
$$
so that
$$
\mu_{m}=\mu^{m,1} \qquad\text{and}\qquad
\nu_{m}=\nu^{m,1} .
$$

%------------------------------------------------------------------------------------------------------------------------------
\begin{lemma}
Denoting, for every $m\geqslant 1$:
$$\phi^m_+(z)=\sum_{k\geqslant 1}a_k^m z^k\qquad\text{and}\qquad \phi^m_-(z)=\sum_{k\leqslant -1}a_k^m z^k,$$
the following identities hold true:
\begin{equation}
\label{main_mu_nu}
\mu^{m,m'}=\int_{-\pi}^\pi
\overline{e^{it}(\phi_{+}^{m'})'(e^{it})}\phi^{m}(e^{it})\,{\rm
	d}t\quad\text{and}\quad
\nu^{m,m'}=\int_{-\pi}^\pi
e^{it}(\phi_{+}^{m'})'(e^{it})\phi^{m}(e^{it})\,{\rm d}t.
\end{equation}
\end{lemma}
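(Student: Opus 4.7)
The plan is to compute $u := \mathscr S_\gamma \hat Q_\gamma^m$ explicitly on both sides of $\gamma$, extract the jump of its normal derivative (which equals $\hat Q_\gamma^m$ by the jump relation \eqref{eq:jump}), and substitute into the boundary pairings defining $\mu^{m,m'}$ and $\nu^{m,m'}$. Throughout, write $Q_\gamma^m = z^m + c_\gamma^m$ on $\gamma$ with $c_\gamma^m := c_{1,\gamma}^m + i c_{2,\gamma}^m$; the task includes determining this constant.

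The interior restriction $u_i$ must coincide with the harmonic polynomial $z^m + c_\gamma^m$ on $\omega$, by uniqueness of the harmonic extension of the trace $Q_\gamma^m$. For the exterior, the pullback $U(w) := u_e(\phi(w))$ is harmonic on $\{|w|>1\}$, equals $\phi^m(e^{it}) + c_\gamma^m$ on the unit circle, and decays at infinity because $\hat Q_\gamma^m \in \widehat H(\gamma)$ forces $u \in W_0^1(\mathbb R^2)$. Matching Fourier modes --- bounded harmonic functions on $\{|w|>1\}$ decaying at infinity are spanned by $\{w^{-k}, \bar w^{-k}\}_{k\geq 1}$, with no constant contribution --- then forces $c_\gamma^m = -a_0^m$ and produces
\[
U(w) \;=\; \phi_+^m(1/\bar w) + \phi_-^m(w), \qquad |w|>1.
\]

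Now compute the normal derivatives on $\gamma$ in the parametrization $z(t) = \phi(e^{it})$, with $ds = |\phi'(e^{it})|\, dt$ and complex outward unit normal $n(t) = e^{it}\phi'(e^{it})/|\phi'(e^{it})|$. The holomorphicity of $u_i$ (with $\partial_z u_i = m z^{m-1}$) gives $(\partial_n u_i)|_{\gamma}\, ds = e^{it}(\phi^m)'(e^{it})\, dt$. For the exterior, the conformality of $\phi$ reduces $(\partial_n u_e)|_\gamma\, ds$ to the radial derivative $\partial_r U|_{r=1}(t)\, dt$; a direct polar computation on the explicit form of $U$ yields $\partial_r U|_{r=1}(t) = \sum_{k\leq -1} k\, a_k^m e^{ikt} - \sum_{k=1}^{m} k\, a_k^m e^{ikt}$. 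Subtracting and using \eqref{eq:jump} produces the key formula
\[
\hat Q_\gamma^m \, d\sigma \;=\; 2\, e^{it}(\phi_+^m)'(e^{it})\, dt \qquad \text{on } \gamma,
\]
where the contributions from the $a_k^m$ with $k\leq 0$ cancel out.

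From Definition~\ref{inner}, $2\mu^{m,m'} = \int_\gamma \hat Q_\gamma^m \, \overline{Q_\gamma^{m'}}\, d\sigma$ and $2\nu^{m,m'} = \int_\gamma \hat Q_\gamma^m\, Q_\gamma^{m'}\, d\sigma$. Substituting the key formula and $Q_\gamma^{m'} = \phi^{m'}(e^{it}) - a_0^{m'}$, the constant $a_0^{m'}$ drops out since $\int_{-\pi}^{\pi} e^{it}(\phi_+^m)'(e^{it})\, dt = 0$, yielding $\mu^{m,m'} = \int e^{it}(\phi_+^m)'\, \overline{\phi^{m'}}\, dt$ and $\nu^{m,m'} = \int e^{it}(\phi_+^m)'\, \phi^{m'}\, dt$. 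A direct Parseval computation shows the $\mu$-expression equals the claimed $\int \phi^m\, \overline{e^{it}(\phi_+^{m'})'}\, dt$; for $\nu$, swapping $m\leftrightarrow m'$ --- justified by the symmetry of the bilinear form $\langle\cdot,\cdot\rangle_{\frac12,\gamma}$ --- yields the claimed $\int \phi^m\, e^{it}(\phi_+^{m'})'\, dt$. The main obstacle will be the accurate identification of the exterior pullback $U$ together with the determination of $c_\gamma^m$, and the Fourier bookkeeping that extracts the clean $\phi_+^m$ jump density; the remaining integrals are then routine.
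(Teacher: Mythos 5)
Your proof is correct, and its skeleton coincides with the paper's: represent the potential $\mathscr S_\gamma \hat Q^m_\gamma$, identify its interior restriction with the harmonic polynomial itself, solve the exterior Dirichlet problem after pulling back by $\phi$, and extract the density from the jump relation \eqref{eq:jump}. Within that common frame you make three choices that genuinely diverge from the printed argument, all defensible. First, you carry the complex combination $Q^m_\gamma=Q^m_{1,\gamma}+iQ^m_{2,\gamma}$ through the whole computation, whereas the paper treats real and imaginary parts separately (via the functions $w^m_1,w^m_2,\lambda^m_1,\lambda^m_2$) and reassembles four real identities at the end through ``lengthy but straightforward computations''; your single complex bookkeeping, culminating in the clean jump density $\hat Q^m_\gamma\,{\rm d}\sigma = 2\,e^{it}(\phi^m_+)'(e^{it})\,{\rm d}t$ (the cancellation of the $k\leqslant 0$ modes you perform is exactly what the paper encodes in $\lambda^m_1,\lambda^m_2$), compresses this considerably. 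Second, you determine the additive constant $c^m_\gamma=-a^m_0$ by noting that a decaying harmonic function on $\{|w|>1\}$ admits no constant Fourier mode, the decay $O(|x|^{-1})$ being guaranteed by $\hat Q^m_\gamma\in\widehat H(\gamma)$; the paper instead evaluates $\langle\hat{\mathsf e}_\gamma,P^m_1+iP^m_2\rangle$ using the explicit equilibrium density $\hat{\mathsf e}_\gamma(e^{it})=1/(2\pi|\phi'(e^{it})|)$ cited from Hille, so your route is self-contained and sidesteps that classical formula. Third, you are more explicit than the paper on the final step: the computation naturally produces $\mu^{m,m'}=\int_{-\pi}^\pi e^{it}(\phi^m_+)'(e^{it})\,\overline{\phi^{m'}(e^{it})}\,{\rm d}t$ and $\nu^{m,m'}=\int_{-\pi}^\pi e^{it}(\phi^m_+)'(e^{it})\,\phi^{m'}(e^{it})\,{\rm d}t$, with the derivative carried by the index $m$, while \eqref{main_mu_nu} carries it on $m'$; your Parseval identity for $\mu$ (both sides equal $2\pi\sum_{k\geqslant 1}k\,a^m_k\overline{a^{m'}_k}$) and the symmetry of the bilinear extension of the inner product for $\nu$ (not circular, since your formula holds for every pair $(m,m')$ before swapping) supply exactly the transposition that the paper leaves implicit in ``Formulae \eqref{main_mu_nu} follow.'' The only cosmetic difference in the middle of the argument---your use of the conformal invariance of $\partial_n u\,{\rm d}s$ reduced to a radial derivative, versus the paper's gradient formula $\nabla U^m_{e,\ell}(\phi(z))\cdot X$ expressed through $(w^m_\ell)'/\phi'$---is an equivalent computation.
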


%------------------------------------------------------------------------------------------------------------------------------
\begin{proof} Let $m,m'\geqslant 1$ and $\ell=1,2$ be fixed. For the sake of simplicity, we drop in this proof the dependence with respect to $\gamma$ and we denote $Q_{\ell,\gamma}^m$ simply by $Q_\ell^m$). 
We aim to compute the quantity:
$$
\langle Q_\ell^m,Q_\ell^{m'}\ranglepp{\gamma}=\langle \widehat Q_\ell^m , Q_\ell^{m'} \ranglemp{{\gamma}}.
$$
To do so, we recall that from the jump relation \eqref{eq:jump}, we have $\displaystyle\widehat Q_\ell^m = [ \partial_n U^m_\ell ]_{\gamma}$, where $U^m_\ell:= \mathscr S_{\gamma} \widehat Q_\ell^m$. Let us denote  by $U^m_{e,\ell}$ and $U^m_{i,\ell}$ the restrictions of $U^m_\ell$ respectively to $\mathbb R^2\setminus\overline\omega$ and $\omega$.

We know that $U^m_{e,\ell}$  solves the following exterior Dirichlet boundary problem:
\begin{subequations}
\label{ext_prb}
\begin{alignat}{3}
-\Delta U^m_{e,\ell}&=0&\quad&\text{in }\mathbb R^2\setminus\overline\omega\\
U^m_{e,\ell}&=Q_\ell^m&&\text{on }\gamma,\\
U^m_{e,\ell}(x)&=O (|x|^{-1})&&\text{as }|x|\to+\infty.
\end{alignat}
\end{subequations}
%Recall that $Q^m_\ell(x)=P^m_\ell(x)+c^m_\ell$, $P^m_1(x)=\Re((x_1+ix_2)^m)$, $P^m_2(x)=\Im((x_1+ix_2)^m)$ and $ c^m_\ell=-\int_\gamma  P_\ell^m(x)\psie{\gamma}(x)\,{\rm d}\sigma_x$. 
The functions $u^m_{e,\ell}:=U^m_{e,\ell}(\phi)$ are harmonic in $\mathbb R^2\setminus
\overline{D}$ ($D$ denotes the unit disk) and satisfy:
$$u^m_{e,1}(x)=\Re(\phi^m(z))+c^m_1\qquad\text{and}\qquad
u^m_{e,2}(x)=\Im(\phi^m(z))+c^m_2.$$
We can easily compute the constants $c^m_1$ and $c^m_2$ by writing that
$$c^m_1+ic^m_2=-\int_\gamma (P^m_1(x)+i\
P^m_2(x))\psie{\gamma}(x)\,{\rm d}\sigma_x=-\int_{-\pi}^{\pi}\phi^m(e^{it})|\phi'(e^{it})|\psie{\gamma}(e^{it})\,{\rm
	d}t.$$
But we know (from direct computations or from \cite[Theorem 17.3.3]{Hille:1962aa}) that $\psie{\gamma}(e^{it})=1/(2\pi|\phi'(e^{it})|)$. It follows
that $c^m_1+i c^m_2 =-a_0^m$ and 
we have, on the boundary of $D$:
$$
u^m_{e,1}(x)=\frac{1}{2}\left[\phi^m(z)+\overline{\phi^m(z)}\right]-\Re(a_0^m)\qquad\text{and}\qquad
u^m_{e,2}(x)=-\frac{i}{2}\left[\phi^m(z)-\overline{\phi^m(z)}\right]-\Im(a_0^m).$$
This can be rewritten, using the identity $\bar z=1/z$ on $\partial D$, as:
\begin{subequations}
	\begin{align}
	u^m_{e,1}(x)&=\frac{1}{2}\left[\overline{\phi^m_+}(z^{-1})+\phi^m_-(z)+\overline{\overline{\phi^m_+}(z^{-1})+\phi^m_-(z)}\right]\\
	u^m_{e,2}(x)&=-\frac{i}{2}\left[-\overline{\phi^m_+}(z^{-1})+\phi^m_-(z)-\left(-\overline{\overline{\phi^m_+}(z^{-1})+\phi^m_-(z)}\right)\right].
	\end{align}
\end{subequations}
These expressions lead us to introduce the following functions:
\begin{subequations}
	\label{def_lambda} 
	\begin{align}
	w^m_1(z)&=\overline{\phi^m_+}(z^{-1})+\phi^m_-(z)=\phi^m(z)-a_0^m+\lambda^m_1(z)\\
	w^m_2(z)&=-\overline{\phi^m_+}(z^{-1})+\phi^m_-(z)=\phi^m(z)-a_0^m+\lambda^m_2(z),
	\end{align}
\end{subequations}
where
\begin{equation}
	\label{def_lambda_2}
	\lambda^m_1(z)=\overline{\phi^m_+}(z^{-1})-\phi^m_+(z)\qquad\text{and}\qquad
	\lambda^m_2(z)=-\overline{\phi^m_+}(z^{-1})-\phi^m_+(z).
\end{equation}
The functions $w^m_1$ and $w^m_2$ are holomorphic in $\mathbb C\setminus\overline{D}$ and:
$$
u^m_{e,1}=\Re(w^m_1)\quad\text{and}\quad u^m_{e,2}=\Im(w^m_2)\quad\text{ on
}\partial D.$$
For every $X=(X_1,X_2)\in\mathbb R^2$ identified with $Z=X_1+iX_2\in\mathbb C$, we
have:
$$
\nabla U^m_{e,1}(\phi(z))\cdot X=\Re\left[(w^{m}_1)'(z)Z/\phi'(z)\right]\quad\text{ and
}\quad
\nabla U^m_{e,2}(\phi(z))\cdot X=\Im\left[(w^{m}_2)'(z)Z/\phi'(z)\right].
$$
On $\gamma$, the outer unit normal vector is parameterized by $t\in[-\pi,\pi[\longmapsto e^{it}{\phi'(e^{it})}/{|\phi'(e^{it})|}$,
and therefore, for every $m'\geqslant 1$:
\begin{equation}
\label{equ:part1}
\int_\gamma\partial_n U^m_{e,1}(x) \, Q^{m'}_{1}(x){\rm d}\sigma_{x}=\int_{-\pi}^{\pi}\Re\left[e^{it}(w^{m}_1)'(e^{it})\right]\, \Re\big[\phi^{m'}(e^{it})\big]{\rm d}t.
	\end{equation}
On the other hand, $U^m_{i,\ell}$ solves the following interior problem:
\begin{alignat*}{3}
-\Delta U^m_{i,\ell}&=0&\quad&\text{in }\omega\\
U^m_{i,\ell}&=Q_\ell^m&&\text{on }\gamma,
\end{alignat*}
whose unique solution is merely $U^m_{i,\ell}=Q_\ell^m$, so that:
\begin{equation}
\label{equ:part2}
\int_\gamma\partial_n U^{m}_{i,1}(x) Q^{m'}_{1}(x){\rm d}\sigma_{x}=\int_{-\pi}^{\pi}\Re\left[
e^{it}(\phi^{m})'(e^{it})\right]\Re\big[\phi^{m'}(e^{it})\big]{\rm d}t.
\end{equation}
Gathering now \eqref{equ:part1}, \eqref{equ:part2} and taking into account the expressions \eqref{def_lambda}, we infer that:
\begin{equation}
\label{eq:qmqmprime}
\langle  Q^{m}_1, Q^{m'}_1\ranglepp{\gamma}=
\int_\gamma [\partial_n U^m_{1}(x)]_{\gamma}\,Q^{m'}_{1}(x){\rm d}\sigma_{x}=
-\int_{-\pi}^{\pi}\Re\left[e^{it}(\lambda^{m}_1)'(e^{it})\right]\Re\big[\phi^{m'}(e^{it})\big]{\rm d}t.
\end{equation}
Notice now that, on $\partial D$, we have:
 $$\Re\left[e^{it}(\lambda^m_1)'(e^{it})\right]=-\left[\overline{e^{it}(\phi_+^m)'(e^{it})}+e^{it}(\phi_+^m)'(e^{it})\right]=-2\Re \left[e^{it}(\phi_+^m)'(e^{it})\right],$$ 
 and therefore, \eqref{eq:qmqmprime} can be rewritten as:
$$\langle  Q^{m}_1, Q^{m'}_1\ranglepp{\gamma}=2
\int_{-\pi}^{\pi}\Re \left[e^{it}(\phi_+^m)'(e^{it})\right]\Re\left[\phi^{m'}(e^{it})\right]{\rm
	d}t.$$
Using similar arguments, lengthy but straightforward computations lead to:
\begin{align*}
\langle  Q^{m}_1, Q^{m'}_2\ranglepp{\gamma}&=2
\int_{-\pi}^{\pi}\Re \left[e^{it}(\phi_+^m)'(e^{it})\right]\Im\left[\phi^{m'}(e^{it})\right]{\rm
	d}t\\
	&=2
\int_{-\pi}^{\pi}\Im \left[e^{it}(\phi_+^m)'(e^{it})\right]\Re\left[\phi^{m'}(e^{it})\right]{\rm
	d}t\intertext{and}
	\langle  Q^{m}_2, Q^{m'}_2\ranglepp{\gamma}&=2
	\int_{-\pi}^{\pi}\Im \left[e^{it}(\phi_+^m)'(e^{it})\right]\Im\left[\phi^{m'}(e^{it})\right]{\rm
	d}t.
\end{align*}
Formulae \eqref{main_mu_nu} follow.
			\end{proof}

			%----------------------------------------------------------------------
			\begin{lemma}
			The following relations hold true:
			$$a_1=\sqrt{\frac{\mu_1}{2\pi}}\quad\text{and}\quad a_0=\frac{\mu_2}{2\mu_1}.$$
			\end{lemma}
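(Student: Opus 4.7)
The plan is to specialize the integral identity \eqref{main_mu_nu} to $m'=1$ and to exploit the fact that $\phi_+^1$ has a trivial closed form. Recall from the Laurent expansion of $\phi$ that the coefficients $a_k$ vanish for $k\geqslant 2$, so $\phi_+^1(z)=a_1 z$ and consequently
$$
e^{it}(\phi_+^1)'(e^{it})=a_1 e^{it},\qquad \overline{e^{it}(\phi_+^1)'(e^{it})}=a_1 e^{-it},
$$
using $a_1>0$. Plugging this into the first identity of \eqref{main_mu_nu} and expanding $\phi^m(e^{it})=\sum_{k\in\mathbb Z}a_k^m e^{ikt}$ (cf.\ \eqref{def_an}), orthogonality of the trigonometric system collapses the integral to a single term and yields the clean formula
$$
\mu_m = \mu^{m,1} = 2\pi a_1\, a_1^m,\qquad m\geqslant 1.
$$

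The first step is then to apply this identity for $m=1$. Since $a_1^1=a_1$, one gets $\mu_1=2\pi a_1^2$, hence $\mu_1>0$ and $a_1=\sqrt{\mu_1/(2\pi)}$, as claimed. The second step is to take $m=2$. Here one has to identify the coefficient of $z^1$ in $\phi^2$. From $\phi(z)=a_1 z+a_0+\sum_{k\leqslant -1}a_k z^k$ one reads off directly
$$
\phi^2(z)=a_1^2 z^2+2a_1 a_0\, z+(\text{terms of degree }0\text{ and below}),
$$
so that $a_1^2=2a_1 a_0$. Combined with the previous step this gives
$$
\mu_2=2\pi a_1\cdot 2a_1 a_0=4\pi a_1^2 a_0=2\mu_1 a_0,
$$
and the formula $a_0=\mu_2/(2\mu_1)$ follows.

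There is no real obstacle here beyond careful bookkeeping: the key simplification is that the only positive-power term of $\phi$ is $a_1 z$, which reduces the full integral representation of $\mu^{m,m'}$ to the extraction of one Fourier coefficient of $\phi^m$. The positivity of $a_1$ (chosen as part of the normalization of the Riemann map) is what makes $\mu_1$ real and positive and allows us to take a real square root.
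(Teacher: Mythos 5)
Your proof is correct and takes essentially the same route as the paper: both specialize the identity \eqref{main_mu_nu} to $m'=1$, use $\phi_+^1(z)=a_1z$ (with $a_1>0$ real by the normalization of the Riemann map) together with Fourier orthogonality to extract the coefficient of $z$ in $\phi^m$, and then read off $\mu_1=2\pi (a_1)^2$ and $\mu_2=2\pi a_1\cdot 2a_1a_0=4\pi (a_1)^2a_0$, exactly as in the paper. One purely notational caution: you use $a_1^2$ both for the square $(a_1)^2$ and for the Laurent coefficient $a^n_k$ of \eqref{def_an} with $n=2$, $k=1$; writing $(a_1)^2$ for the square, as the paper does, avoids the clash.
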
		
			%----------------------------------------------------------------------
			\begin{proof}
			For every $z\in\mathbb C\setminus \overline{D}$ (recall that $D$ denotes the unit disk), we have $\phi^1_+(z)=\phi_+(z)=a_1 z$ and $\phi^2_+(z)=(a_1)^2 z^2+2a_1a_0 z$, and hence
			$$z\phi_+'(z)=a_1 z\qquad\text{and}\qquad z(\phi^2_+)'(z)=2(a_1)^2z^2+2a_1 a_0.$$
			Applying formulae \eqref{main_mu_nu}, we obtain:
			$$\mu_1=2\pi (a_1)^2\qquad\text{and}\qquad \mu_2=4\pi (a_1)^2 a_0.$$
			The conclusion of the lemma follows.
			\end{proof}
			%----------------------------------------------------------------------
			The conformal mapping $\phi^{-1}$ can be expanded as a Laurent's series having the form:
			$$\phi^{-1}(z)=b_1 z+b_0+\sum_{k\leqslant -1}b_k z^k,$$
			outside a disk $D'$ centered at the origin and containing $\omega$.  The complex coefficients $b_k$ ($k\leqslant 1$) can be deduced on the one hand 
			from the coefficients $a_k$ of $\phi$, and on the other hand from the values of $\nu_{m}$, ($m\geqslant 1$), as claimed in the following lemma:
			%----------------------------------------------------------------------
			\begin{lemma}
			The following relations hold true:
			\begin{equation}
\label{def_b_1}
b_1=1/a_1\quad\text{and}\quad b_0=-a_0/a_1.
\end{equation}
For every $m\geqslant 1$, we have:
\begin{equation}
\label{estim_b}
b_{-m}=-\frac{\nu_{m}}{2\pi
	a_1m}=-\frac{1}{m}\sum_{|\beta|=-1}a_{\beta_1}\ldots
a_{\beta_m},\qquad m\geqslant 1.
\end{equation}
			\end{lemma}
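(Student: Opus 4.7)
The plan is to combine two ingredients: (i) functional inversion of the conformal map $\phi$ at infinity, which gives the coefficients $b_k$ of $\phi^{-1}$, and (ii) the integral representation of $\nu_m$ from the preceding lemma, which connects a Laurent coefficient of $\phi^m$ to the measurement-based quantity $\nu_m$. The two formulas for $b_{-m}$ in \eqref{estim_b} will emerge from the same residue computation.

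For $b_1$ and $b_0$, I would simply substitute the Laurent expansions of $\phi$ and $\phi^{-1}$ into the identity $\phi^{-1}(\phi(z))=z$. Since $\phi(z)=a_1 z + a_0 + O(|z|^{-1})$, expanding
\begin{equation*}
\phi^{-1}(\phi(z)) = b_1(a_1 z + a_0) + b_0 + O(|z|^{-1})
\end{equation*}
and matching the coefficient of $z$ and the constant term yields $b_1 a_1 = 1$ and $b_1 a_0 + b_0 = 0$, hence \eqref{def_b_1}.

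For the core formula, I would extract $b_{-m}$ by a contour integral in the $w$-plane: for $R'$ large enough,
\begin{equation*}
b_{-m}=\frac{1}{2\pi i}\oint_{|w|=R'}\phi^{-1}(w)\,w^{m-1}\,{\rm d}w.
\end{equation*}
The change of variables $w=\phi(z)$ is licit on a large circle $|z|=R$ because $\phi(z)\sim a_1 z$ at infinity and $a_1>0$ preserves orientation; this gives
\begin{equation*}
b_{-m}=\frac{1}{2\pi i}\oint_{|z|=R} z\,\phi(z)^{m-1}\phi'(z)\,{\rm d}z
      =-\frac{1}{2\pi i\, m}\oint_{|z|=R}\phi(z)^m\,{\rm d}z,
\end{equation*}
after an integration by parts in which the boundary term vanishes on the closed loop. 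The remaining integral is precisely $a_{-1}^m$, the coefficient of $z^{-1}$ in $\phi^m$, so $b_{-m}=-a_{-1}^m/m$. This immediately delivers the right-hand equality of \eqref{estim_b} thanks to the definition \eqref{def_an} of $a_{-1}^m$ as a sum over multi-indices $\beta\in\mathbb Z^m$ with $|\beta|=-1$.

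It remains to convert $a_{-1}^m$ into $\nu_m$. I would use \eqref{main_mu_nu} with $m'=1$: since $\phi_+^1(z)=a_1 z$, one has $e^{it}(\phi_+^1)'(e^{it})=a_1 e^{it}$, and expanding $\phi^m(e^{it})=\sum_k a_k^m e^{ikt}$ the integral $\int_{-\pi}^{\pi}e^{it}\phi^m(e^{it})\,{\rm d}t$ picks out the single term $k=-1$, giving $\nu_m=2\pi a_1 a_{-1}^m$. Substituting $a_{-1}^m=\nu_m/(2\pi a_1)$ into $b_{-m}=-a_{-1}^m/m$ yields the left-hand equality of \eqref{estim_b}. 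The only mildly delicate point is justifying the change of contour in the residue step (verifying that $\phi$ maps a large enough circle $|z|=R$ onto a simple closed curve enclosing $\overline{D'}$ once positively), which is a standard consequence of $\phi$ being a conformal map at infinity with positive leading coefficient $a_1$.
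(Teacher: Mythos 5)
Your proof is correct and takes essentially the same route as the paper's: both hinge on the identity $\nu_m = 2\pi a_1 a_{-1}^m = -2\pi a_1 m\, b_{-m}$, obtained from \eqref{main_mu_nu} with $m'=1$ via one integration by parts and Cauchy's integral formula, the only difference being that you run the residue computation on large circles (starting from the coefficient formula for $b_{-m}$ and pulling back through $w=\phi(z)$), while the paper works directly on $\partial D$ and interprets the second integral over $\gamma$ before deforming it to $\partial D'$. Your explicit coefficient matching in $\phi^{-1}(\phi(z))=z$ is precisely the paper's unwritten ``straightforward'' argument for \eqref{def_b_1}.
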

			%----------------------------------------------------------------------
			\begin{proof} Identities \eqref{def_b_1} follow straightforwardly because $\phi$ and $\phi^{-1}$ are inverse mappings. Integrating by part the expression of $\nu_{m}=\nu^{m,1}$ obtained from \eqref{main_mu_nu}, we get:
\begin{equation}
\label{nu_int0}
\nu_{m}=a_1\int_{-\pi}^{\pi}e^{it}\phi^{m}(e^{it})\,{\rm
	d}t=-a_1m\int_{-\pi}^{\pi}e^{it}\phi'(e^{it})e^{it}\phi^{m-1}(e^{it})\,{\rm
	d}t.
\end{equation}
Since $t\mapsto e^{it}$ is a parameterization of $\partial D$, applying  Cauchy's integral formula we get on the one hand
$$
\int_{-\pi}^{\pi}e^{it}\phi^{m}(e^{it})\,{\rm	d}t=-i \int_{\partial D}\phi^m(\xi){\rm d}\xi =2\pi a_{-1}^m.
$$
On the other hand, since $t\mapsto\phi(e^{it})$ is a parameterization of $\gamma$ and the function $\phi^{-1}$ being holomorphic in $\mathbb C\setminus \overline{\omega}$, we have:
$$
\int_{-\pi}^{\pi}e^{it}\phi'(e^{it})e^{it}\phi^{m-1}(e^{it})\,{\rm	d}t =-i \int_{\gamma}\phi^{-1}(\xi)\xi^{m-1}\,{\rm d}\xi=-i \int_{\partial D'}\phi^{-1}(\xi)\xi^{m-1}\,{\rm d}\xi =2\pi b_{-m}.
$$
Identity \eqref{nu_int0} can thus be rewritten as:
$$\nu_{m}=2\pi a_1 a_{-1}^m=-2\pi a_1 mb_{-m},$$
and identity \eqref{estim_b} follows according to \eqref{def_an}. 
\end{proof}

Using the above lemmas, 	we are in position to prove the main result of this section, namely Theorem~\ref{thm:expl}.	
		
		%----------------------------------------------------------------------
\begin{proof}[of Theorem~\ref{thm:expl}]
Since $\phi$ and $\phi^{-1}$ play symmetric roles, we can exchange $a_m$ and $b_m$ in Formula \eqref{estim_b} to obtain:
$$
a_{-m}=-\frac{1}{m}\sum_{|\beta|=-1}b_{\beta_1}\ldots
b_{\beta_m},\qquad m\geqslant 1.
$$
Reordering the terms of the above sum, we get that
\begin{equation}
\label{estim_a}
a_{-m} = -\frac{1}{m}\sum_{(\theta,\alpha)\in \mathsf B_{m}} b_{1}^{\theta}b_{0}^{\alpha_0}b_{-1}^{\alpha_{1}}\dots b_{-m}^{\alpha_{m}}
\end{equation}
where $\mathsf B_{m}$ is the set of  $(\theta,\alpha)\in {\mathbb N}\times {\mathbb N}^{m+1}$ such that
\begin{align*}
\theta + \alpha_{0} + \alpha_{1} +\dots+\alpha_{m} &=m\\
\theta -  ( \alpha_{1} +2\alpha_{2}+\dots+m\alpha_{m}) &=-1.
\end{align*}
Now, one can easily check that $(\theta,\alpha)\in \mathsf B_{m}$ if and only if $\alpha$ belongs to the set $\mathsf A_{m}$ defined by \eqref{eq:An} and $\theta=m-(\alpha_{0} + \alpha_{1} +\dots+\alpha_{m})$. Therefore, \eqref{estim_a} also reads
$$
a_{-m} = -\frac{1}{m}\sum_{\alpha\in \mathsf A_{m}} b_{1}^{m-(\alpha_{0} + \alpha_{1} +\dots+\alpha_{m})}b_{0}^{\alpha_0}b_{-1}^{\alpha_{1}}\dots b_{-m}^{\alpha_{m}}
$$
Using \eqref{def_b_1} and the first equality of \eqref{estim_b} in the above relation, we obtain that
$$
a_{-m} =- \frac{1}{m} a_{1}^{-m} \sum_{\alpha\in \mathsf A_{m}} (-a_{0})^{\alpha_0}
\left(-\frac{\nu_{1}}{2\pi}\right)^{\alpha_1}\dots \left(-\frac{\nu_{m}}{2\pi m}\right)^{\alpha_m},
$$
and the conclusion follows immediately.
\end{proof}

%==============================================================================
\subsection{About stability}
It is well-known that logarithmic stability is best possible for Calder\'on's inverse problem. In the particular case of cavities, this result is proved in \cite[Theorem 4.1]{AleRon01} where the error on the geometry (measured using the Hausdorff distance) is estimated in terms of the error of the DtN (measured in operator norm). 
However, as suggested by Alessandrini and Vessella \cite{AleVes05}, one can try to construct stable functionals, namely Lipschitz-continuous functions of the data carrying relevant information on the geometry of the obstacle. According to formula \eqref{la_haut}, each coefficient $a_{k}$, $k\leqslant 1$, yields an example of such functional. Actually, we can prove that each coefficient is not only a Lipschitz-continuous function of the data, but is analytic. Let us define the following open subspace of $\mathcal L(H(\Gamma))$:
$$\mathcal U_\Gamma = \{\mathsf R\in  \mathcal L(H(\Gamma))\, : \,  {\rm Id}+\mathsf R \text{ invertible and } 
\mu_{1}(\mathsf R) >0\},$$
where $\mu_{1}(\mathsf R) :=\langle Q^1_{\Gamma},({\rm Id}+\mathsf R)^{-1}\mathsf R\overline {Q^{1}_{\Gamma}}\ranglepp{\Gamma}$.

Notice in particular that, for every Lipschitz Jordan curve $\gamma$, the continuous linear mapping $\mathsf R = \mathsf S_\Gamma(\Lambda_\gamma-\Lambda_0)$ belongs to $\mathcal U_\Gamma$.  
We deduce straightforwardly  the following analyticity result:
%---------------------------------------------------------------------------------------------------------------------
\begin{theorem}
On the open subset $\mathcal U_\Gamma$ of $\mathcal L(H(\Gamma))$ define the sequence of analytic functions $a_k:\mathcal U_\Gamma\to \mathbb C$ ($k\leqslant 1$) as given by the formulae \eqref{la_haut}. If 
$\mathsf R=\mathsf S_\Gamma(\Lambda_\gamma-\Lambda_0)$ for some Lipschitz Jordan curve $\gamma$, a parameterization of $\gamma$ is given by:
$$t\in]-\pi,\pi]\mapsto \sum_{k\leqslant 1} a_k(\mathsf R) e^{ikt}\in\mathbb C.$$
\end{theorem}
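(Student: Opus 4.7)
The plan is to split the statement into two independent assertions and handle them separately: (i) each $a_k$, $k\leqslant 1$, defined by the formulae \eqref{la_haut} is an analytic $\mathbb C$-valued function on $\mathcal U_\Gamma$; (ii) when $\mathsf R = \mathsf S_\Gamma(\Lambda_\gamma - \Lambda_0)$, the values $a_k(\mathsf R)$ coincide with the Laurent coefficients of the Riemann map $\phi$, so that $t \mapsto \sum_{k\leqslant 1} a_k(\mathsf R) e^{ikt}$ parameterizes $\gamma$.

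For (i), the key observation is that the inversion map $\mathsf R \mapsto ({\rm Id} + \mathsf R)^{-1}$ is operator-analytic on the open subset of $\mathcal L(H(\Gamma))$ where ${\rm Id}+\mathsf R$ is invertible. This is standard: around any point $\mathsf R_0$ of this open set, the local Neumann expansion
$$({\rm Id}+\mathsf R_0+\mathsf H)^{-1}=\sum_{n\geqslant 0}\bigl(-({\rm Id}+\mathsf R_0)^{-1}\mathsf H\bigr)^n({\rm Id}+\mathsf R_0)^{-1}$$
converges in operator norm for $\|\mathsf H\|$ small enough, exhibiting the inverse as a convergent power series in $\mathsf H$. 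Composing with the linear operator $\mathsf R\mapsto \mathsf R$ and then pairing against the fixed elements $Q_\Gamma^m$ and $\overline{Q_\Gamma^m}$ through the continuous duality bracket $\langle \cdot,\cdot\ranglepp{\Gamma}$, we obtain that the scalar functions $\mathsf R \mapsto \mu_m(\mathsf R)$ and $\mathsf R \mapsto \nu_m(\mathsf R)$ defined by \eqref{eq:mum}--\eqref{eq:num} (and their generalizations $\mu^{m,m'}, \nu^{m,m'}$) are analytic on $\mathcal U_\Gamma$. On $\mathcal U_\Gamma$ we have moreover $\mu_1(\mathsf R)>0$ by definition, so the principal square root yields an analytic function $\mathsf R\mapsto (\mu_1/(2\pi))^{1/2}$, which is precisely $a_1(\mathsf R)$. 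Then $a_0=\mu_2/(2\mu_1)$ is a ratio of analytic functions with nonvanishing denominator, and each $a_{-m}$ is a finite polynomial combination of $\mu_2/\mu_1$ and $\nu_1,\ldots,\nu_m$ multiplied by $\mu_1^{-m/2}$. All of these are analytic on $\mathcal U_\Gamma$.

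For (ii), we simply invoke Theorem~\ref{thm:expl}. When $\mathsf R=\mathsf S_\Gamma(\Lambda_\gamma-\Lambda_0)$, Theorem~\ref{first_decom} gives $({\rm Id}+\mathsf R)^{-1}\mathsf R=\mathsf K$ with $\mathsf K$ a strict contraction (Proposition~\ref{prop:contract_K}), so ${\rm Id}+\mathsf R$ is invertible; moreover $\mu_1(\mathsf R)=2\pi a_1^2>0$ by the normalization of the Riemann map, so $\mathsf R\in\mathcal U_\Gamma$. By Theorem~\ref{thm:expl}, the values produced by the formulae \eqref{la_haut} coincide with the Laurent coefficients $a_k$ of $\phi$. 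Since $t\mapsto\phi(e^{it})=\sum_{k\leqslant 1} a_k e^{ikt}$ is a parameterization of $\gamma$ (Pommerenke \cite{Pommerenke:1992aa}), the stated parameterization follows.

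The one mildly delicate point is verifying that $\mathsf R=\mathsf S_\Gamma(\Lambda_\gamma-\Lambda_0)$ does lie in $\mathcal U_\Gamma$ so that assertion (ii) is not vacuous; this is handled by the combination of Theorems~\ref{first_decom}--\ref{thm:expl} above. Beyond this, there is no substantial obstacle: the analyticity reduces to the analyticity of operator inversion followed by continuous scalar pairings, and the identification of the $a_k$ with Riemann-map coefficients is already established by Theorem~\ref{thm:expl}.
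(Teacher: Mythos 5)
Your proposal is correct and follows essentially the same route as the paper, which in fact states this theorem as an immediate consequence ("we deduce straightforwardly") of the explicit formulae \eqref{la_haut}, Theorem~\ref{thm:expl} and the observation that $\mathsf S_\Gamma(\Lambda_\gamma-\Lambda_0)\in\mathcal U_\Gamma$; you merely fill in the routine details (Neumann-series analyticity of operator inversion, continuity of the pairings, nonvanishing of $\mu_1$, and invertibility of ${\rm Id}+\mathsf R$ via the factorization $\mathsf R=({\rm Id}-\mathsf K)^{-1}\mathsf K$ and Proposition~\ref{prop:contract_K}) that the paper leaves implicit.
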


%==============================================================================
\section{Numerical results}
\label{sect:NUM}
%==============================================================================
We present in this section some numerical experiments meant to illustrate the feasibility of the proposed  reconstruction method. For the sake of clarity, let us first sum up the different steps of the simple reconstruction algorithm:
\begin{enumerate}
\item Compute a numerical approximation of the operator $\mathsf R=\mathsf S_\Gamma(\Lambda_\gamma-\Lambda_0)$.
\item Fix an integer $M$ and compute for $1\leqslant m\leqslant M$ 
$$
\mu_m:=\frac{1}{2}\langle Q^m_\gamma,\overline {Q^1_\gamma}\rangle_{\frac12,\gamma} =\frac12 \langle Q^m_\Gamma,({\rm Id}+\mathsf R)^{-1}\mathsf R \overline {Q^1_\Gamma} \rangle_{\frac 12, \Gamma}
$$
$$
\nu_m:=\frac{1}{2}\langle Q^m_\gamma, Q^1_\gamma\rangle_{\frac12,\gamma}=\frac{1}{2}\langle Q^m_\Gamma,({\rm Id}+\mathsf R)^{-1}\mathsf R Q^1_\Gamma \rangle_{\frac 12, \Gamma}.
$$
\item Compute $(a_{-m})_{-1\leqslant m\leqslant M}$ via formulae \eqref{la_haut}.
\item Plot the image of the unit circle by $$\phi_M(z)= a_1 z+a_0+\sum_{1\leqslant m\leqslant M} a_{-m} z^{-m}.$$
\end{enumerate}
Let us give some details about the implementation. We use the finite dimensional approximation space spanned by the family  $\mathcal Q_{\Gamma}^M:=\{Q^m_{\Gamma}, \, \overline {Q^m_{\Gamma}},\, 1\leqslant m \leqslant M\}$. We denote by  $\mathbf Q_{\Gamma}$ the $2M\times 2M$ complex matrix whose entries are the $\langle f,g\ranglepp{\Gamma}$, where $(f,g)\in \mathcal Q_{\Gamma}^M\times \mathcal Q_{\Gamma}^M$. Note that $\mathbf Q_{\Gamma}$ is nothing but the Generalized Polya-Szeg\"o Tensor (GPST) associated to $\Gamma$. Obviously, a similar matrix  $\mathbf Q_{\gamma}$ can be defined for the boundary $\gamma$. We denote by $ \mathbf R$ the matrix whose entries are $\langle f,\mathsf R g\ranglepp{\Gamma} = \langle (\Lambda_\gamma-\Lambda_0) g,f\ranglemp{\Gamma}$, for  $(f,g)\in \mathcal Q_{\Gamma}^M\times \mathcal Q_{\Gamma}^M$. With this notation, the reader can easily check that formula \eqref{factor:main_varia} admits the following discrete version:
\begin{equation}
\label{belle_formule}
\mathbf Q_{\gamma} \simeq \widetilde{\mathbf Q}_{\gamma}:=  \mathbf Q_{\Gamma}  (\mathbf Q_{\Gamma} + \mathbf R)^{-1} \mathbf R.
\end{equation}
This formula relates in a very simple way, through the measurement operator $\mathbf R$, the GPST of $\gamma$ to the GPST of $\Gamma$. In particular, the coefficients $\mu_m$ and $\nu_m$ are particular entries of $\frac12 \mathbf Q_{\gamma} $.

We consider now a test configuration in which $\Gamma$ is an ellipse centered at the origin and of major axis $[-1.9,1.9]$ and minor axis $[-1.1,1.1]$. The boundary $\gamma$ of the obstacle is parameterized by: 
$$t\in]-\pi,\pi]\mapsto \sum_{k=-7}^1 a_k e^{ikt},$$
where the complex coefficients $a_k$ are given in the following table:
$$
\begin{array}{|c|c|c|c|c|c|c|c|c|}
\hline
a_1&a_0&a_{-1}&a_{-2}&a_{-3}&a_{-4}&a_{-5}&a_{-6}&a_{-7}\\ \hline
 0.5&-1&0.085& - 0.06i  &-0.035&   0.06i&0& - 0.01i&-0.005\\ \hline
\end{array}
$$

The data are generated using the \verb?Matlab? Laplace boundary integral equation solver (for more information, see this link: \href{http://www.iecn.u-nancy.fr/~munnier/IES/}{IES}). Taking $M=12$, we first show on Figure~\ref{fig:test1} the reconstructed cavity for exact data and using the eight coefficients $a_1,\dots,a_{-6}$.
\begin{figure}[h]
	\centerline{\includegraphics[height=0.39\textwidth]{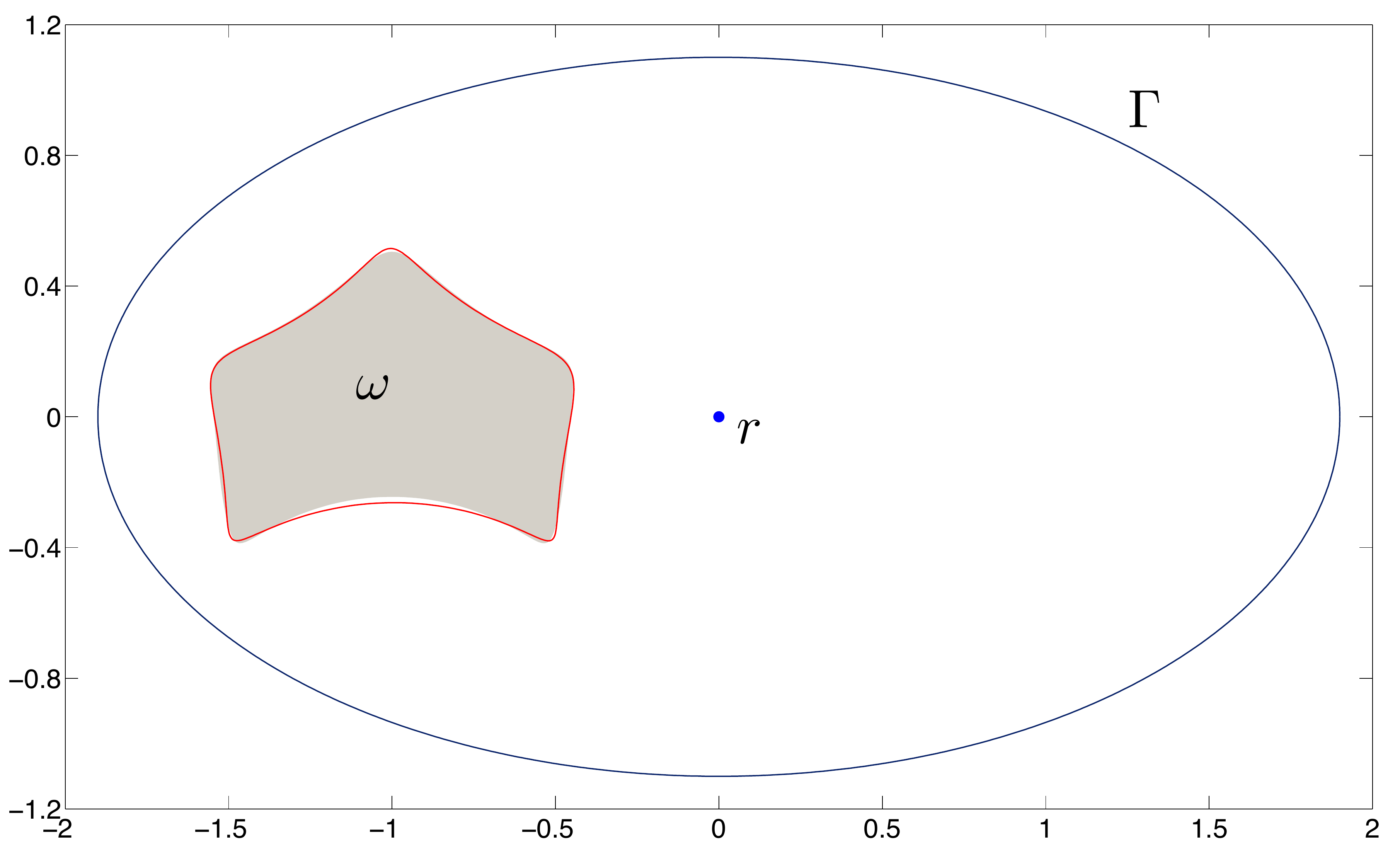}}
	\caption{Typical configuration: reconstruction with $a_1,\ldots,a_{-6}$ (in red) and actual inclusion (in gray). The blue point stands for the position of the origin $r$.}
	\label{fig:test1}
\end{figure}

Instead of using the harmonic polynomials $z^{n}$ in Definition~\ref{def:PQ}, one can use the shifted harmonic polynomials $(z-r)^{n}$, for some given $r\in \mathbb C$. This additional parameter turns out to have some influence on the quality of the reconstructed cavities, as shown in Figure~\ref{fig:r}. For instance, choosing $r$ in the neighborhood of $-0.5$, one can recover the six coefficients $a_{1}, \dots, a_{-4}$ with a relative error less than 2\%, while for $r=0$ this accuracy is achieved only for the coefficients $a_{1}, \dots, a_{-2}$. 

\begin{figure}[h]
	\centerline{\includegraphics[height=0.39\textwidth]{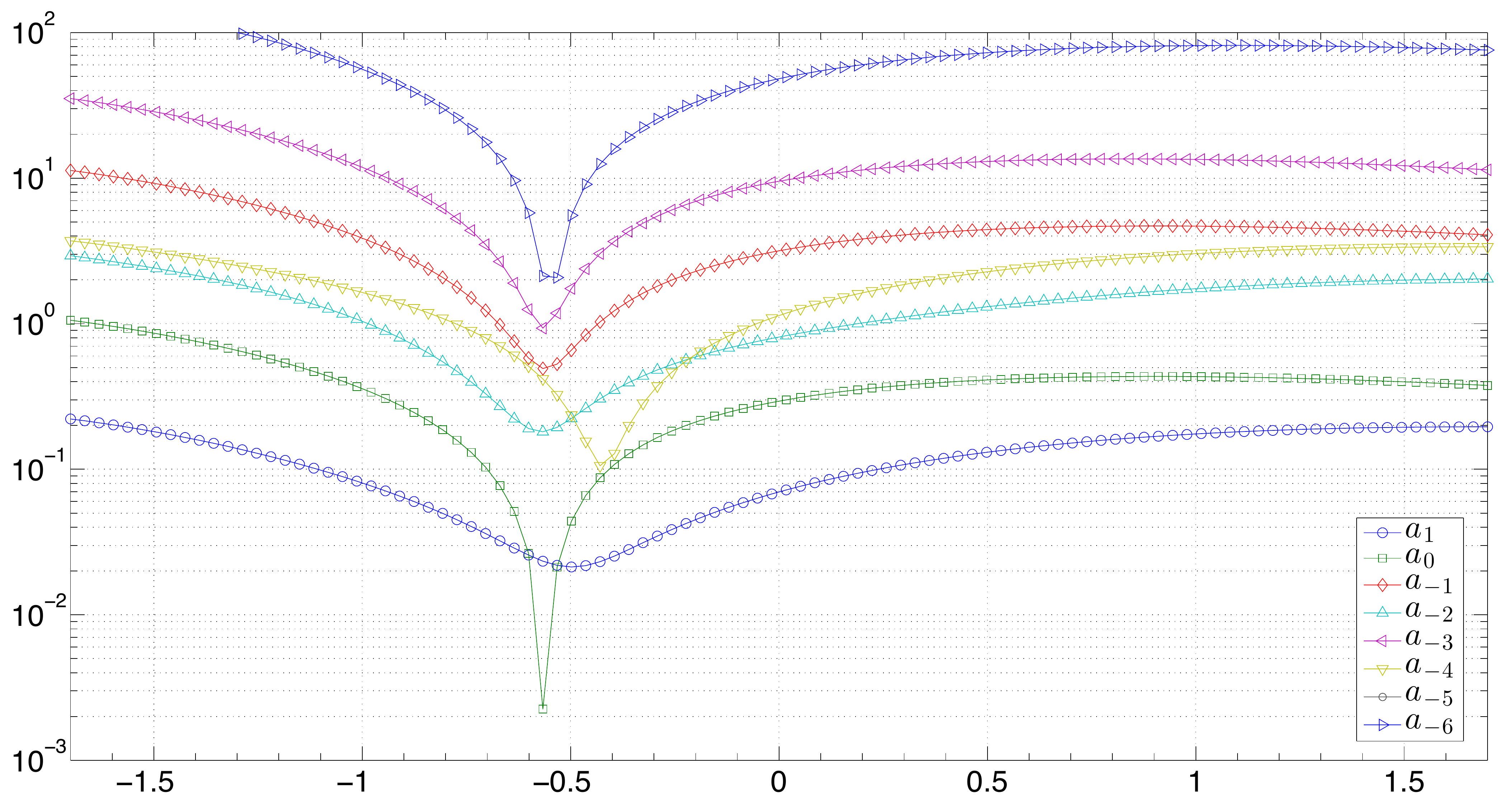}}
	\caption{Relative error of the retrieved coefficients (in $\%$) with respect to the abscissa of $r$ (there is no relative error for the coefficient $a_{-5}$ because it is null).}
	\label{fig:r}
\end{figure}

Let us take now $r=-0.5$ and consider a more realistic configuration of noisy data. We generate a random matrix $\mathbf N$ having the same size as $\mathbf R$ and whose coefficients are uniformly distributed between $-1$ and $1$. For $\delta = 0.05, 0.15, 0.25,0.35$, we compute the matrix $\mathbf R^N$ whose coefficients are:
$$R^N_{ij}=(1+\delta N_{ij})R_{ij},\qquad 1\leqslant i,j\leqslant 2M,$$
and we replace $\mathbf R$ by $\mathbf R^N$ in formula \eqref{belle_formule}.

We show on figures~\ref{fig:noise}-\ref{fig:noise3} examples of reconstructed cavities respectively with $5\%, 15\%, 25\%$ and $35\%$ of noise. The number of correctly recovered coefficients decreases with the level of noise and only those coefficients are used in the reconstruction. We plot on Figure~\ref{fig:noisy_data} the dependence of the  mean relative error with respect to the level of noise and we notice a good stability of the first three coefficients $a_{1}, a_{0}$ and $a_{-1}$. 
\begin{figure}[h]
	\centerline{\includegraphics[height=0.39\textwidth]{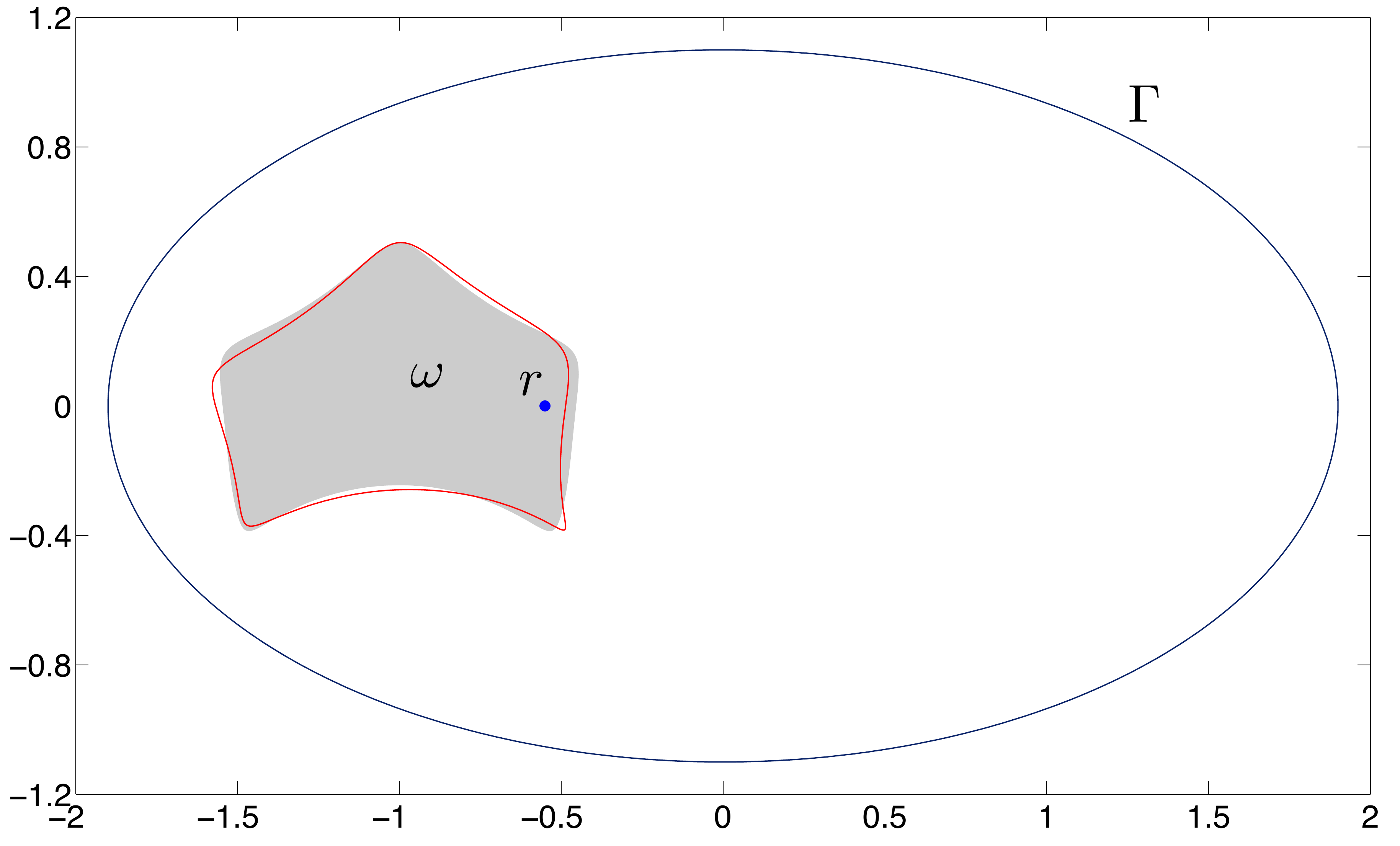}}
	\caption{Reconstruction (in red) with $a_1,\ldots,a_{-4}$ and actual inclusion (in gray) with $5\%$ noise.}
	\label{fig:noise}
\end{figure}
\begin{figure}[h]
	\centerline{\includegraphics[height=0.39\textwidth]{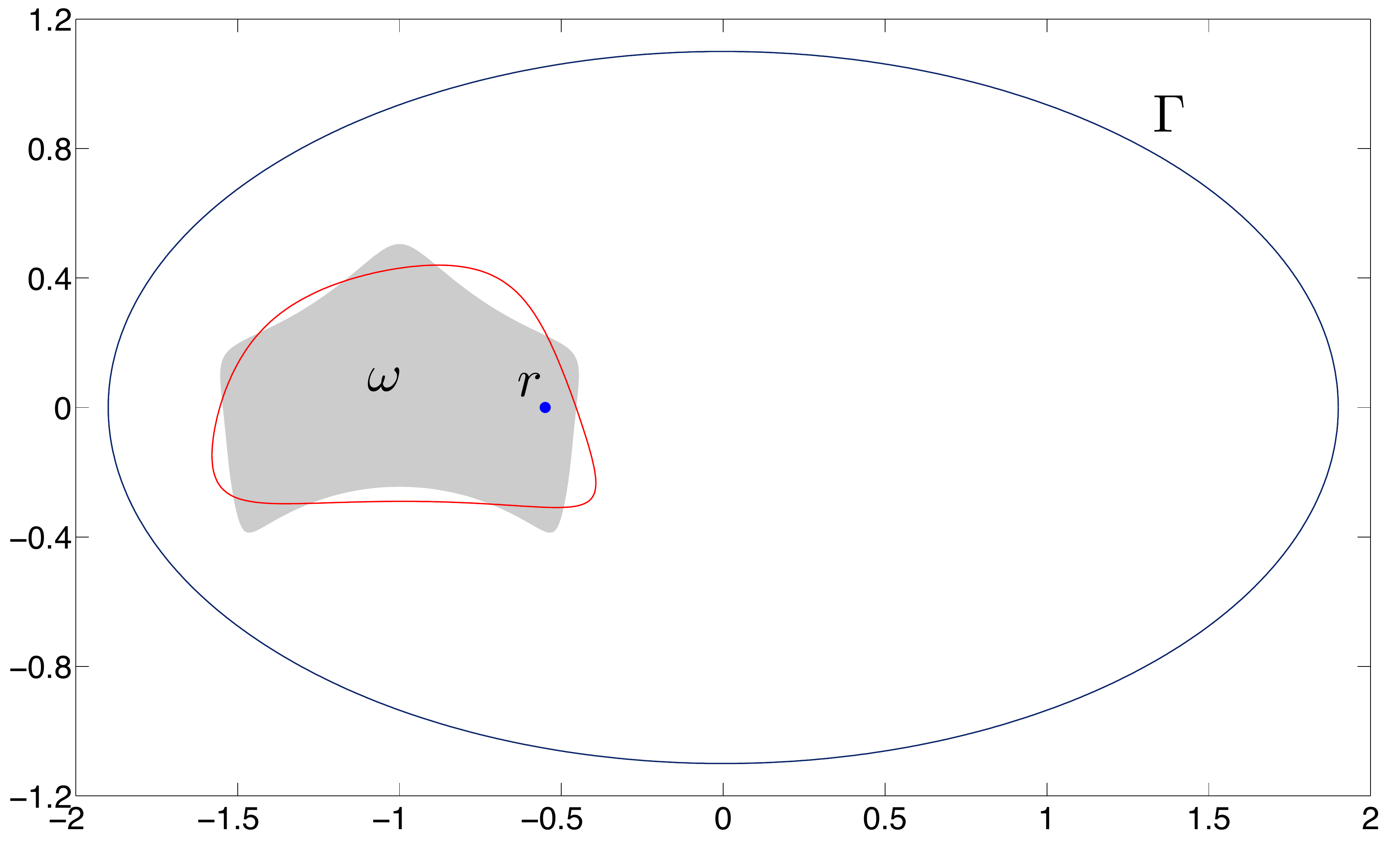}}
	\caption{Reconstruction (in red) with $a_1,\ldots,a_{-4}$ and actual inclusion (in gray) with $15\%$ noise.}
	\label{fig:noise1}
\end{figure}

\begin{figure}[h]
	\centerline{\includegraphics[height=0.39\textwidth]{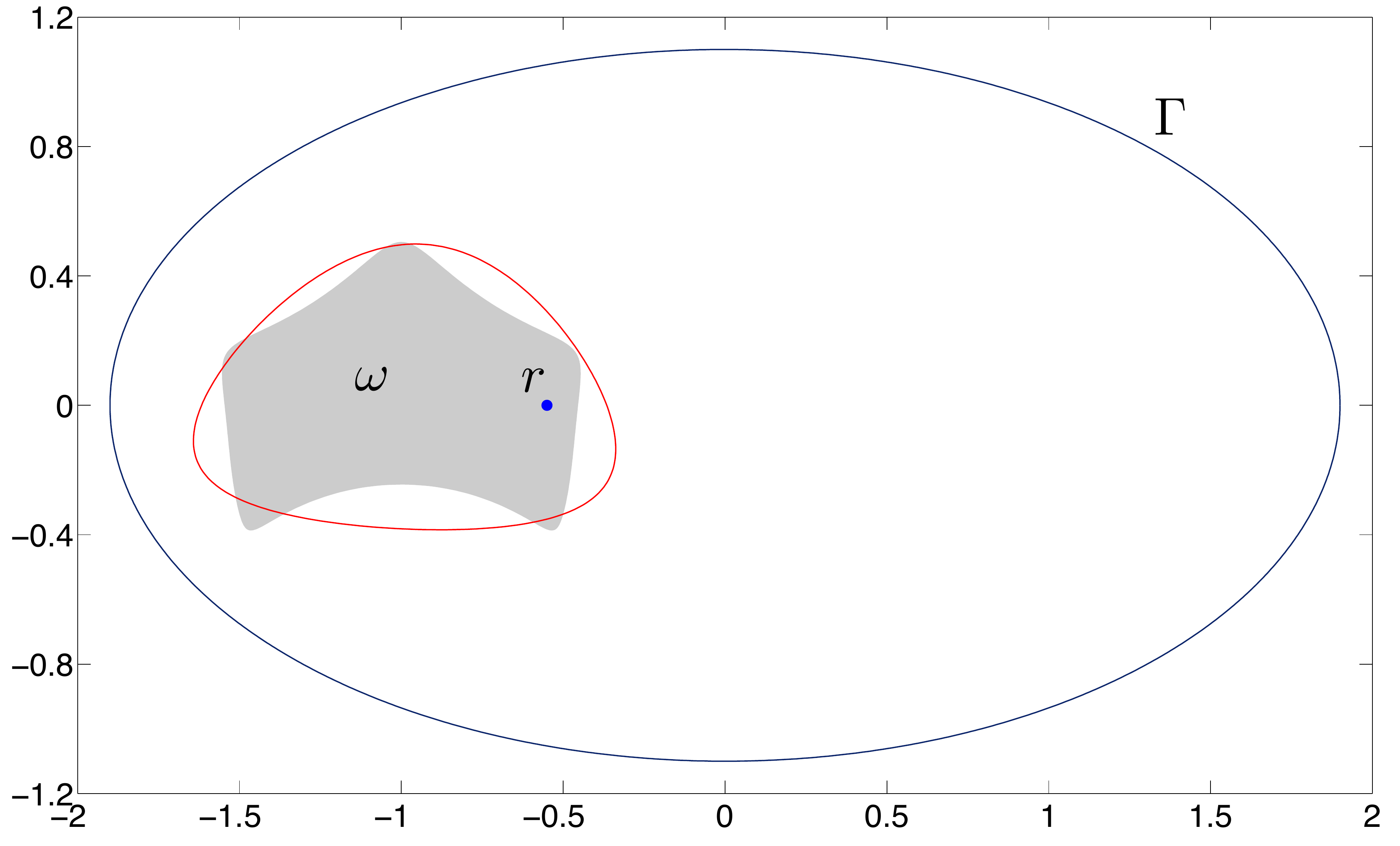}}
	\caption{Reconstruction (in red) with $a_1,\ldots,a_{-2}$ and actual inclusion (in gray) with $25\%$ noise.}
	\label{fig:noise2}	
\end{figure}
\begin{figure}[h]
	\centerline{\includegraphics[height=0.39\textwidth]{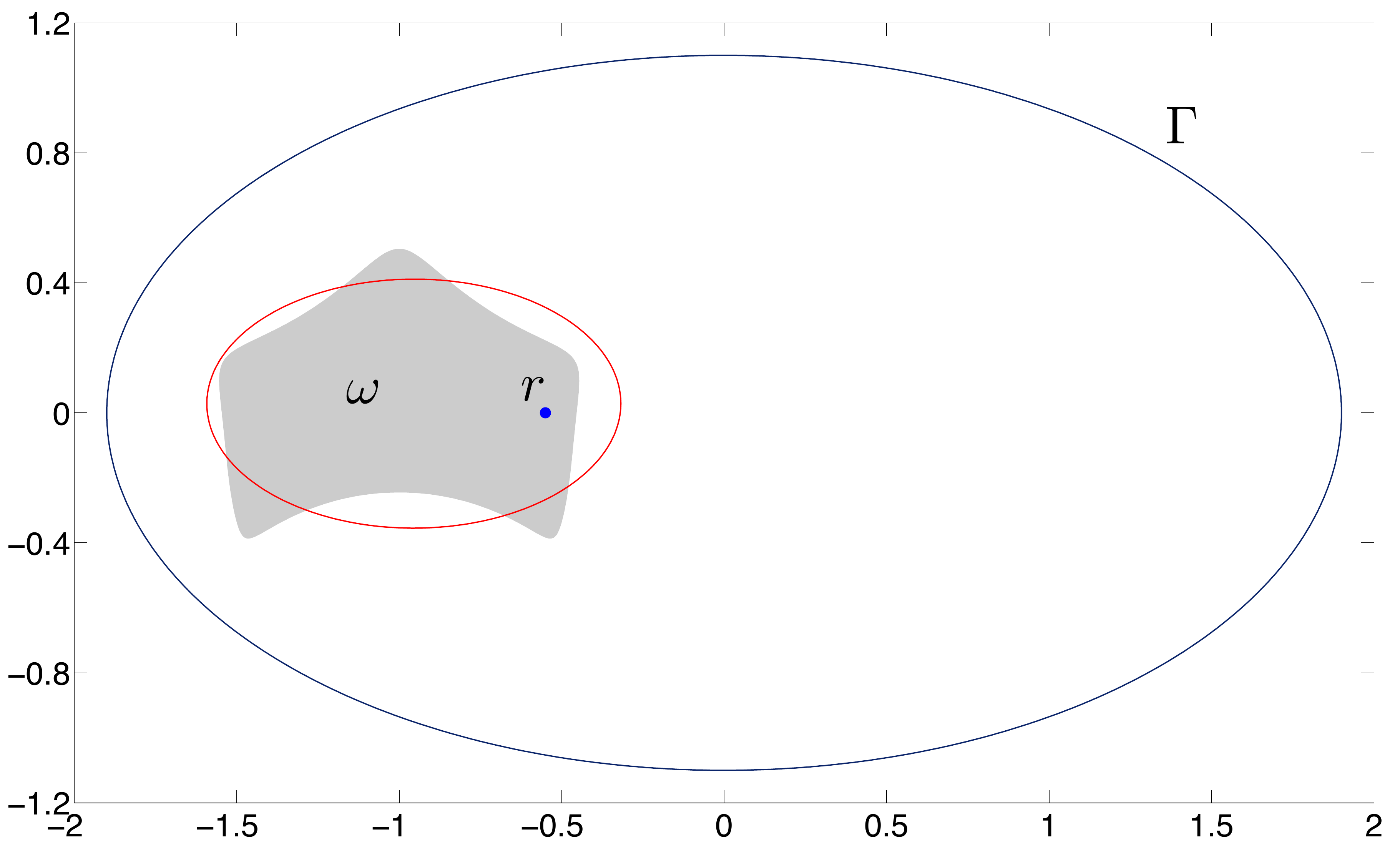}}
	\caption{Reconstruction (in red) with $a_1,\ldots,a_{-1}$ and actual inclusion (in gray) with $35\%$ noise.}
	\label{fig:noise3}
\end{figure}

%We have proved that every coefficient $a_k$ ($k\leqslant 1$) is stable. We consider now noisy data:
%$$\mathbb S_{ij} = \mathbb S_{ij} + \mathbb N_{ij}\mathbb S_{ij}$$

\begin{figure}[h]
	\centerline{\includegraphics[height=0.39\textwidth]{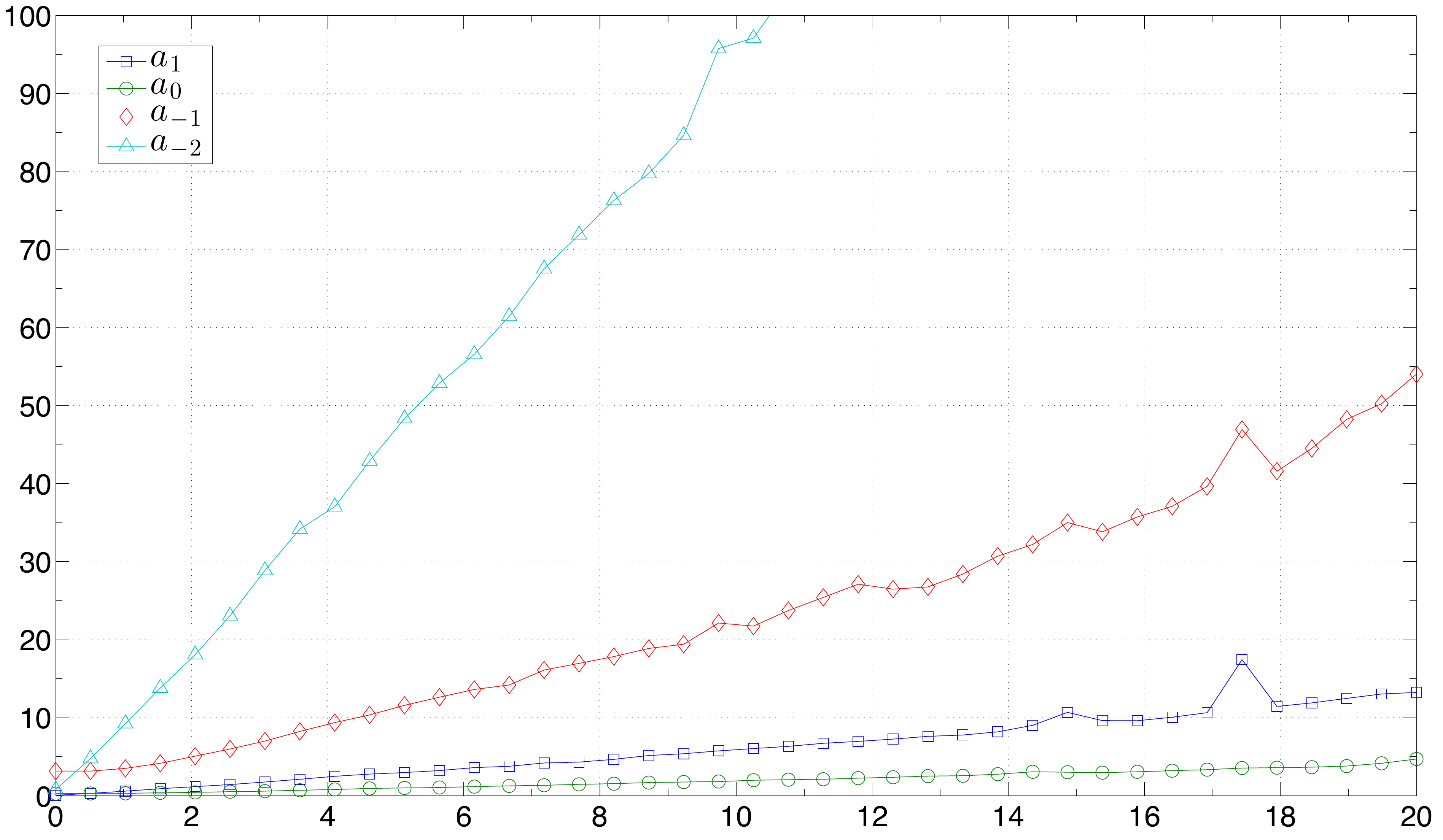}}
	\caption{Relative error of the retrieved coefficients (in $\%$)  with respect to the level of noise (in $\%$) of the data. The point $r$ is the center of the ellipse. Notice the stability of the conformal center $a_0$ and the logarithmic capacity $a_1$.}
	\label{fig:noisy_data}	
\end{figure}

Finally, we illustrate on Figure~\ref{fig:other} the efficiency of the method for more complex geometries (non convex outer boundary $\Gamma$ and a non centered cavity). The choice of the parameter $r$ to obtain good reconstructions is not clear so far and this would need to be further investigated.

\begin{figure}[h]
	\centerline{\includegraphics[height=0.39\textwidth]{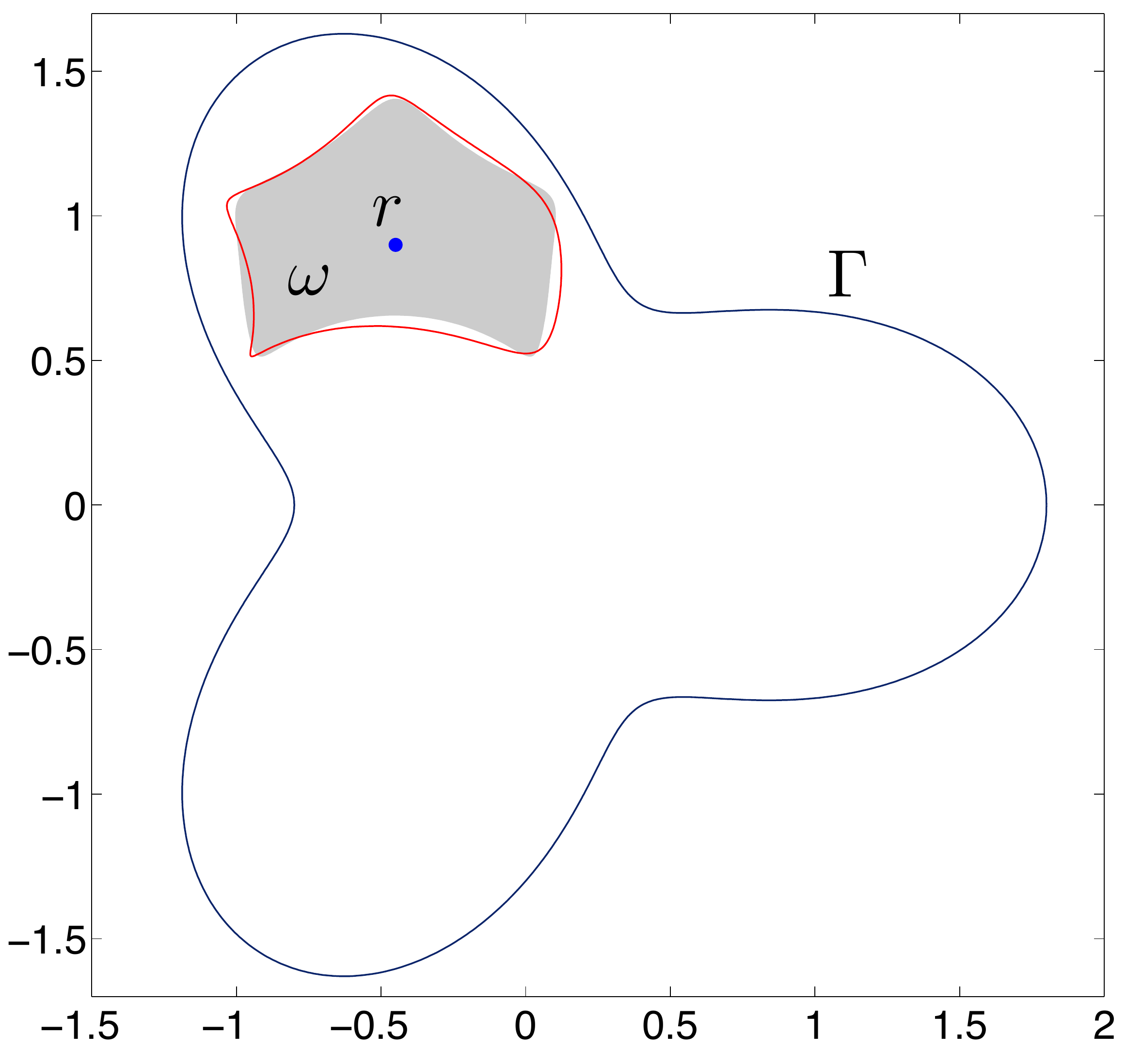}\qquad\includegraphics[height=0.39\textwidth]{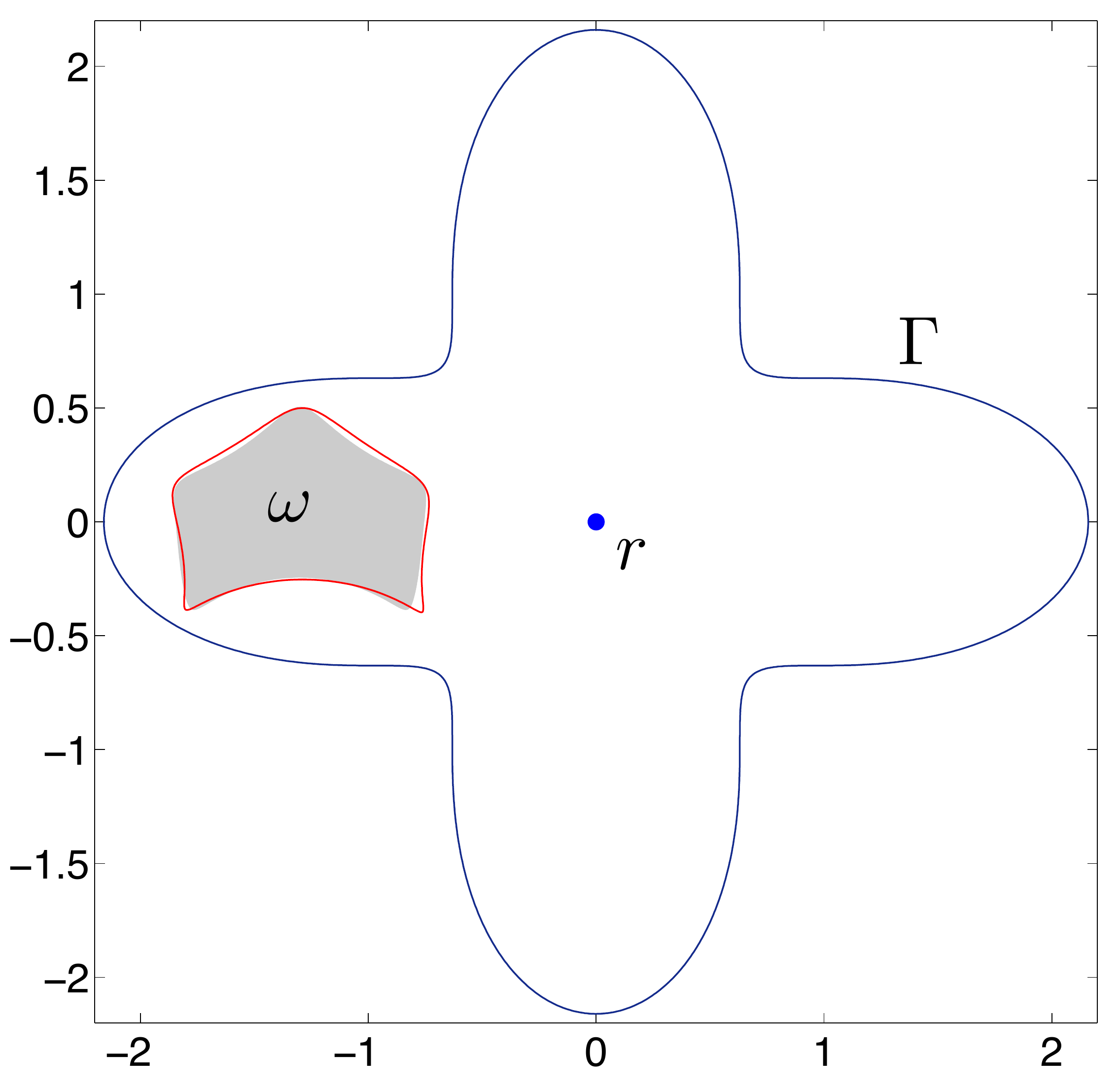}}
	\caption{Other examples of reconstruction with more complex boundary $\Gamma$.}
	\label{fig:other}		
\end{figure}

\appendix
%==============================================================================
\section{Appendix}
%==============================================================================

Consider problem \eqref{calderon} with a piecewise conductivity $\sigma(x) = 1+ (\alpha-1) 1_{\omega}(x)$ (where $1_{\omega}$ denotes the characteristic function of $\omega$ and $\alpha$ a positive constant).

%We set  $\mathsf C_i=\Omega\setminus\overline{\omega}$, $\mathsf C_e=\mathbb
%R^2\setminus\overline{\mathsf C}_i$ and $\mathscr C=\partial\mathsf
%C=\Gamma\cup\gamma$. Notice that the $\mathsf C_e$ consists in two connected
%components: an unbounded one, denoted by $\mathsf C_e^+$ and a bounded one, 
%denoted by $\mathsf C_e^-$.

\begin{prop}
\label{propppp}
	For every $f\in H^{\frac{1}{2}}(\Gamma)$, System \eqref{main_problem}  admits a
	unique solution $(u^f,c^f)\in H^1(\Omega\setminus\overline{\omega})\times\mathbb R$. It is the unique pair that realizes:
	\begin{equation}
	\label{min_1}
	\min_{(u,c)\in H^1(\Omega\setminus\overline{\omega})\times\mathbb R}\left\{\frac{1}{2}\int_{\Omega\setminus\overline{\omega}}|\nabla
	u|^2{\rm d}x\,:\,u|_{\Gamma}=f,\,u|_\gamma=c\right\}.
	\end{equation}
	The function $u^f$ can be considered as a function of $H^1(\Omega)$ by setting $u^f=c^f$ in $\omega$.
	
	For every $\alpha>0$, System \eqref{calderon}  admits a unique solution $u^f_\alpha\in H^1(\Omega)$. This function achieves:
	\begin{equation}
	\label{min_1_b}
	\min_{u\in H^1(\Omega)}\left\{\frac{1}{2}\int_{\Omega\setminus\overline{\omega}}|\nabla
	u|^2{\rm d}x+\frac{\alpha}{2}\int_{\omega}|\nabla
	u|^2{\rm d}x\,:\,u|_{\Gamma}=f\right\}.
	\end{equation}
	The following convergence result holds true for every $f\in\Hp{\Gamma}$:
	$$u_\alpha^f\to u^f\qquad\text{in }H^1(\Omega)\text{ as }\alpha\to+\infty.$$
	\end{prop}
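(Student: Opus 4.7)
The first two claims (well-posedness of \eqref{main_problem} and \eqref{calderon}) are standard consequences of the direct method of the calculus of variations. For \eqref{min_1}, the admissible set $K:=\{(u,c)\in H^1(\Omega\setminus\overline\omega)\times\mathbb R\,:\,u|_\Gamma=f,\ u|_\gamma=c\}$ is a closed convex affine subspace, and the functional is quadratic, strictly convex and coercive (lifting $f$ into $H^1(\Omega\setminus\overline\omega)$ reduces to Poincaré's inequality in the subspace with zero trace on $\Gamma$). Writing the Euler--Lagrange equations with test functions supported in $\Omega\setminus\overline\omega$ yields harmonicity, while differentiating in $c$ against the constant test function gives the zero-flux condition \eqref{free_circ}. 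The well-posedness of \eqref{min_1_b} is a direct Lax--Milgram argument on $H^1_0(\Omega)$ after lifting $f$, the coercivity being uniform in $\alpha\geqslant 1$.

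For the convergence statement, the plan is to combine an a priori energy estimate with a weak-compactness / identification of the limit / norm-convergence argument. First, I would take the trivial extension of $u^f$ by the constant $c^f$ inside $\omega$ as a competitor in the minimization \eqref{min_1_b}. Since the gradient of this extension vanishes on $\omega$, this produces the uniform bound
\begin{equation*}
\tfrac12\int_{\Omega\setminus\overline\omega}|\nabla u^f_\alpha|^2\,{\rm d}x+\tfrac\alpha2\int_\omega|\nabla u^f_\alpha|^2\,{\rm d}x\ \leqslant\ \tfrac12\int_{\Omega\setminus\overline\omega}|\nabla u^f|^2\,{\rm d}x.
\end{equation*}
This simultaneously gives $\|u^f_\alpha\|_{H^1(\Omega)}\leqslant C$ and $\int_\omega|\nabla u^f_\alpha|^2\,{\rm d}x=O(1/\alpha)$. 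Up to a subsequence, $u^f_\alpha\rightharpoonup u^\star$ in $H^1(\Omega)$, and the second estimate forces $\nabla u^\star=0$ on $\omega$, so $u^\star$ is constant on the connected set $\omega$, with trace $f$ on $\Gamma$.

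Next I would identify $u^\star$ with $u^f$ by passing to the limit in the weak formulation
\begin{equation*}
\int_{\Omega\setminus\overline\omega}\nabla u^f_\alpha\cdot\nabla v\,{\rm d}x+\alpha\int_\omega\nabla u^f_\alpha\cdot\nabla v\,{\rm d}x=0,\qquad v\in H^1_0(\Omega).
\end{equation*}
Testing first with $v$ compactly supported in $\Omega\setminus\overline\omega$ shows that $u^\star$ is harmonic in $\Omega\setminus\overline\omega$. The delicate point, which will be the main technical step, is the recovery of the zero-flux condition \eqref{free_circ}: here I would use test functions $v\in H^1_0(\Omega)$ that are \emph{constant} on $\omega$ (obtained by extending any constant by a cutoff equal to $1$ near $\gamma$ and vanishing near $\Gamma$), so that the $\alpha$-term drops out identically; passing to the weak limit and integrating by parts in $\Omega\setminus\overline\omega$ leaves $c_0\int_\gamma\partial_n u^\star\,{\rm d}\sigma=0$ for every $c_0\in\mathbb R$. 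By the uniqueness in \eqref{main_problem} this identifies $u^\star=u^f$ (with the constant value being $c^f$), and therefore the whole family converges weakly.

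Finally, to upgrade weak to strong convergence in $H^1(\Omega)$, I would exploit the sharpness of the energy bound. Lower semicontinuity of the $H^1$-seminorm under weak convergence, combined with the upper bound above, yields
\begin{equation*}
\int_{\Omega\setminus\overline\omega}|\nabla u^f|^2\,{\rm d}x\ \leqslant\ \liminf_{\alpha\to+\infty}\int_{\Omega\setminus\overline\omega}|\nabla u^f_\alpha|^2\,{\rm d}x\ \leqslant\ \limsup_{\alpha\to+\infty}\int_{\Omega\setminus\overline\omega}|\nabla u^f_\alpha|^2\,{\rm d}x\ \leqslant\ \int_{\Omega\setminus\overline\omega}|\nabla u^f|^2\,{\rm d}x,
\end{equation*}
hence convergence of norms; together with weak convergence this gives strong convergence of $\nabla u^f_\alpha$ to $\nabla u^f$ in $L^2(\Omega\setminus\overline\omega)$. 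On $\omega$, the estimate $\int_\omega|\nabla u^f_\alpha|^2=O(1/\alpha)$ gives strong convergence of the gradient to $0=\nabla u^f$. Strong $L^2(\Omega)$-convergence of $u^f_\alpha$ itself follows from the Rellich--Kondrachov theorem, which completes the proof.
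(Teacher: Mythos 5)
Your proof is correct, and while it shares its starting point with the paper's, it concludes by a genuinely different route. Both arguments begin by comparing $u^f_\alpha$ with the extension of $u^f$ by $c^f$: the paper writes $u=u^f+w$ with $w\in H^1_0(\Omega)$ and observes that the resulting minimum in $w$ is negative, which is in substance your competitor estimate. From there the paper stays entirely quantitative: thanks to \eqref{free_circ} it may subtract the mean ${\overline w}^f_\alpha$ of $w^f_\alpha$ over $\omega$ in the boundary term $\int_\gamma \partial_n u^f\, w^f_\alpha\,{\rm d}\sigma$, and the Poincar\'e--Wirtinger inequality then bounds this term by $C\|\nabla w^f_\alpha\|_{L^2(\omega)}$; combined with the negativity of the minimum this yields directly $\|\nabla w^f_\alpha\|_{L^2(\omega)}=O(\alpha^{-1})$ and then $\|\nabla w^f_\alpha\|_{L^2(\Omega\setminus\overline{\omega})}=O(\alpha^{-1/2})$, i.e.\ strong convergence with an explicit rate, with no extraction of subsequences and no identification step --- the zero-flux property of $u^f$ is used as an \emph{input} rather than recovered in the limit. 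You instead run the standard weak-compactness template (uniform energy bound, weak limit, identification, energy convergence to upgrade weak to strong), and your two nontrivial steps are sound: testing with $v\in H^1_0(\Omega)$ constant on $\omega$ does kill the $\alpha$-term and recovers \eqref{free_circ} for the weak limit (the pairing $\langle \partial_n u^\star, v\rangle$ on $\gamma$ being well defined by harmonicity and the generalized Green formula), and the liminf/limsup sandwich legitimately converts weak convergence plus norm convergence into strong $H^1(\Omega)$ convergence of the whole family, by uniqueness of the limit point. What each approach buys: the paper's computation is shorter and gives a convergence rate for free; yours is more robust, in that it would survive perturbations of the problem for which no explicit bound on the boundary term such as \eqref{poinca} is available. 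For the well-posedness parts the difference is cosmetic --- the paper uses the explicit decomposition $u=w+e^f+cv$ with harmonic liftings and computes $c^f$ in closed form, whereas you invoke strict convexity and Lax--Milgram; note only that your coercivity claim for \eqref{min_1} must also control the scalar variable $c$, which follows from the trace inequality on $\gamma$ once $\|u\|_{H^1(\Omega\setminus\overline{\omega})}$ is controlled.
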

	%-------------------------------------------------------------------------------------------------------------------
	\begin{proof}
The minimization problem \eqref{min_1} can be reformulated as:
	\begin{equation}
	\label{min_2}
	\min_{(w,c)\in H^1_0(\Omega\setminus\overline{\omega})\times\mathbb R}\int_{\Omega\setminus\overline{\omega}}|\nabla (w +
	e^f+ cv)|^2{\rm d}x,
	\end{equation}
	where $e^f$ and $v$ are both harmonic in $\Omega\setminus\overline{\omega}$ with Dirichlet data
	$e^f|_{\Gamma}=f$, $e^f|_\gamma=0$ and $v|_\Gamma=0$, $v|_\gamma=1$. For every 
	$w\in H^1_0(\Omega\setminus\overline{\omega})$, we get:
	$$\int_{\Omega\setminus\overline{\omega}}|\nabla (w + e^f+ cv)|^2{\rm d}x=\int_{\Omega\setminus\overline{\omega}}|\nabla
	w|^2{\rm d}x+\int_{\Omega\setminus\overline{\omega}}|\nabla e^f|^2{\rm d}x+c^2
	\int_{\Omega\setminus\overline{\omega}}|\nabla v|^2{\rm d}x+2c \int_{\gamma}\partial_n e^f{\rm
		d}\sigma,$$
	and therefore the minimum in \eqref{min_2} is unique and achieved for $w=0$ and 
	$$c^f = - \int_{\gamma}\partial_n e^f{\rm d}\sigma\left(\int_{\Omega\setminus\overline{\omega}}|\nabla v|^2{\rm d}x\right)^{-1}.$$
	The corresponding  fonction $u^f:= e^f+c^f v$ is the unique minimizer of problem
	\eqref{min_1}, and can easily be shown to solve System \eqref{main_problem}. It
	is classical to verify that, reciprocally, every solution of System \eqref{main_problem} provides a solution
	to the minimization problem \eqref{min_1}.
	
	Seeking the minimum of problem \eqref{min_1_b} in the form $u=u^f+w$ with $w\in H^1_0(\Omega)$, we are led to consider the new, equivalent, minimization problem:
	\begin{equation}
	\label{min_2_b}
	\min_{w\in H^1_0(\Omega)}\int_{\Omega\setminus\overline{\omega}}|\nabla w|^2{\rm d}x + \frac{\alpha}{2}\int_{\omega}|\nabla w|^2{\rm d}x+\int_\gamma \partial_n u^f w\,{\rm d}\sigma,
	\end{equation}
	where we have used the fact that $u^f=c^f$ in $\omega$. The existence and uniqueness for such a problem is straightforward and we denote by $w_\alpha^f$ the minimizer. 
	Introducing 
	$${\overline w}^f_\alpha=\frac{1}{{\rm mes}(\omega)}\int_\omega w^f_\alpha\,{\rm d}x,$$
	and taking into account the condition \eqref{free_circ}, the last term in the right hand side can be rewritten as:
	$$\int_\gamma \partial_n u^f w^f_\alpha\,{\rm d}\sigma = \int_\gamma \partial_n u^f (w^f_\alpha-{\overline w}^f_\alpha)\,{\rm d}\sigma.$$
	Invoking the Poincar\'e-Wirtinger inequality in $\omega$, we get the estimate:
	\begin{equation}
	\label{poinca}
	\left|\int_\gamma \partial_n u^f w^f_\alpha\,{\rm d}\sigma\right|\leqslant C\|\nabla w^f_\alpha\|_{L^2(\omega)},
	\end{equation}
	where the constant $C>0$ depends only upon $\omega$. The minimum \eqref{min_2_b} is negative ($w=0$ is an admissible function), whence we deduce that:
	$$\frac{\alpha}{2}\|\nabla w^f_\alpha\|_{L^2(\omega)}\leqslant C,$$
	and therefore 
	\begin{equation}
	\label{wfto0}
	\|\nabla w^f_\alpha\|_{L^2(\omega)}\to 0\quad\text{as}\quad \alpha\to+\infty.
	\end{equation}
	Remarking again that the minimum \eqref{min_2_b} is negative and using the estimate \eqref{poinca} together with the convergence result \eqref{wfto0}, we deduce that:
	$$\|\nabla w^f_\alpha\|_{L^2(\Omega\setminus\overline{\omega})}\to 0\quad\text{as}\quad\alpha\to+\infty,$$
	and the proof is completed.
\end{proof}

\end{document}